\documentclass{amsart}

\usepackage{amsmath,amssymb,amsthm,amscd}
\usepackage{bm}
\usepackage{color}
\usepackage{mathrsfs}
\usepackage{upgreek}
\usepackage[all]{xy}

\newtheorem{theorem}{Theorem}

\newtheorem{prop}[theorem]{Proposition}
\newtheorem{lemma}[theorem]{Lemma}
\newtheorem{cor}[theorem]{Corollary}

\newtheorem{rem}[theorem]{Remark}

\newtheorem{ques}[theorem]{Question}

\numberwithin{theorem}{section}
\numberwithin{equation}{section}
\newcommand{\W}{\mathcal{W}}
\newcommand{\Z}{\mathbb{Z}}
\newcommand{\C}{\mathbb{C}}
\newcommand{\Hc}{\mathcal{H}}
\newcommand{\V}{\mathcal{V}}
\newcommand{\E}{\mathcal{E}}
\newcommand{\F}{\mathcal{F}}
\newcommand{\g}{\mathfrak{g}}
\newcommand{\h}{\mathfrak{h}}
\newcommand{\gl}{\mathfrak{gl}}
\newcommand{\slf}{\mathfrak{sl}}
\newcommand{\e}{\operatorname{e}}
\newcommand{\ad}{\operatorname{ad}}
\newcommand{\str}{\operatorname{str}}
\newcommand{\Ker}{\operatorname{Ker}}
\newcommand{\Vtau}{V^{\tau_{k}}(\g_0)}
\newcommand{\Fne}{\Phi(\g_{\frac{1}{2}})}
\newcommand{\prin}{\mathrm{prin}}
\newcommand{\scr}{{\bf Q}}
\newcommand{\der}{\partial}
\newcommand{\Com}{\operatorname{Com}}
\newcommand{\bk}{{\bf k}}

\title{Ito's conjecture and the coset construction for $\W^k(\slf(3|2))$}

\author{Naoki Genra}
\address{Department of Mathematical and Statistical Sciences,University of Alberta, Edmonton, AB T6G 2G1, Canada.}
\email{genra@ualberta.ca}
\thanks{N.G. was supported by Grant-in-Aid for JSPS Fellows (No.17J07495) and the Research Institute for Mathematical Sciences, a Joint Usage/Research Center located in Kyoto University, and is supported by JSPS Overseas Research Fellowships.}

\author{Andrew R. Linshaw}
\address{Department of Mathematics, University of Denver, Denver, CO 80208}
\email{andrew.linshaw@du.edu}
\thanks{A. L. is supported by Simons Foundation Grant \#318755.}

\begin{document}

{\abstract \noindent Many $\W$-(super)algebras which are defined by the generalized Drinfeld-Sokolov reduction are also known or expected to have coset realizations. For example, it was conjectured by Ito that the principal $\W$-superalgebra $\W^k(\slf(n+1|n))$ is isomorphic to the coset of $V^{l+1}(\gl_n)$ inside $V^{l}(\slf_{n+1}) \otimes \E(n)$ for generic values of $l$. Here $\E(n)$ denotes the rank $n$ $bc$-system, which carries an action of $V^1(\gl_n)$, and $k$ and $l$ are related by $(k + 1) (l + n + 1) = 1$. This conjecture is known in the case $n=1$, which is somewhat degenerate, and we shall prove it in the first nontrivial case $n=2$. As a consequence, we show that the simple quotient $\W_k(\slf(3|2))$ is lisse and rational for all positive integers $l>1$. These are new examples of rational $\W$-superalgebras.}

\maketitle

\section{Introduction}
Let $\g$ be a basic classical Lie superalgebra over $\mathbb{C}$, $f$ a nilpotent element with even parity, $k$ a complex number and
\begin{align*}
\Gamma:\g=\bigoplus_{j\in\frac{1}{2}\Z}\g_j
\end{align*}
a good grading for $f$. Then the (affine) $\W$-algebras $\W^k(\g,f;\Gamma)$ are defined as $\frac{1}{2}\Z_{\geq 0}$-graded vertex superalgebras by generalized Drinfeld-Sokolov reductions associated with $\g, f,\Gamma, k$ \cite{FF,KRW}. By \cite{KW3}, we have the Miura map
\begin{align*}
\mu\colon\W^k(\g,f;\Gamma)\rightarrow\Vtau\otimes\Fne,
\end{align*}
where $\Vtau$ is the affine vertex superalgebra of $\g_0$ with its invariant bilinear form $\tau_k$ (see (2.2) in \cite{G} for the definition of $\tau_k$), and $\Fne$ is the neutral vertex superalgebra associated with $\g_{\frac{1}{2}}$ (see \cite{KRW} for the definition of $\Fne$). If $k$ is generic, $\W^k(\g,f;\Gamma)$ is simple, so $\mu$ is injective and the image of $\mu$ may be described as the intersection of kernels of screening operators; see \cite{G}. By \cite{KW3, KW4}, if $\{ u_i\}_{i=1}^{\dim\g^f}$ is a basis of $\g^f$ with $u_i\in\g_{j_i}$, where $\g^f$ is the centralizer of $f$ in $\g$, the $\W$-algebra $\W^k(\g,f;\Gamma)$ has a minimal set $\{ \widetilde{J}_{u_i}\}_{i=1}^{\dim\g^f}$ of strong generators, and then $\widetilde{J}_{u_i}$ has the conformal degree $1-j_i$ and the same parity as $u_i$, i.e. $\W^k(\g,f;\Gamma)$ is of type $\W(1-j_1,\ldots,1-j_{\dim\g^f})$. See e.g. \cite[Section 2]{CL} for definitions of strong generators and type of vertex superalgebras.

It is often useful to give an alternative realization of $\W^k(\g,f;\Gamma)$ using the {\it coset construction}. Recall that given a vertex algebra $\mathcal{V}$ and a vertex subalgebra $\mathcal{A}\subset \mathcal{V}$, the coset of $\mathcal{A}$ in $\mathcal{V}$ is defined by $$\Com(\mathcal{A}, \mathcal{V}) := \{v\in \mathcal{V}|\ [a(z), v(w)] = 0,\  \forall a \in \mathcal{A}\}.$$ This was introduced by Frenkel and Zhu in \cite{FZ}, and it generalizes earlier constructions in representation theory \cite{KP} and physics \cite{GKO}. Suppose that $L$ and $L'$ are the Virasoro elements of $\mathcal{V}$ and $\mathcal{A}$, respectively. Then under mild hypotheses, $L - L'$ is the Virasoro element of $\Com(\mathcal{A}, \mathcal{V})$, and $v \in \mathcal{V}$ lies in $\Com(\mathcal{A}, \mathcal{V})$ if and only if $L'_{-1} v = 0$; see Theorems 5.1 and 5.2 of \cite{FZ}. These results also hold if $\mathcal{V}$ and $\mathcal{A}$ are vertex superalgebras, and if the conformal degree grading is by $\frac{1}{2} \mathbb{N}$ rather than $\mathbb{N}$. It is expected that $\Com(\mathcal{A}, \mathcal{V})$ will inherit properties of $\mathcal{A}$ and $\mathcal{V}$ such as lisseness and rationality, although general results of this kind are not yet known. We say the vertex subalgebras $\mathcal{A}, \mathcal{B}$ of $\mathcal{V}$ form a {\it dual pair} if $\mathcal{B} = \Com(\mathcal{A}, \mathcal{V})$ and $\mathcal{A} = \Com(\mathcal{B}, \mathcal{V})$; in this case, $\mathcal{A} \otimes \mathcal{B}$ is conformally embedded in $\mathcal{V}$.

Given a simple Lie algebra $\g$, let $\W^k(\g)$ denote the universal $\W$-algebra of $\g$ associated to this principal nilpotent element and principal gradation. We denote by $\W_k(\g)$ its unique simple graded quotient. It was proven by Arakawa \cite{Ar2,Ar3} that $\W_k(\g)$ is lisse and rational when $k$ is a non-degenerate admissible level. These $\W$-algebras are called the {\it minimal series principal $\W$-algebras} since in the case $\g=\slf_2$ they are exactly the minimal series Virasoro vertex algebras. They are not necessarily unitary, but if $\g$ is simply laced, there exists a sub-series called the {\it discrete series} which were conjectured for many years to be unitary. 

For a simple Lie algebra $\g$ and $l\in\C$, let $V^l(\g)$ be the universal affine vertex algebra associated with $\g$ of level $l$, and let $L_l(\g)$ be its unique simple quotient which is graded by conformal degree. We denote by $u(z)$ the generating fields of $V^l(\g)$ for $u\in\g$. By abuse of notation, we also denote by $u(z)$ the generating fields of $L_l(\g)$. There exists a diagonal homomorphism $$V^{l+1}(\g) \rightarrow V^l(\g)\otimes L_1(\g),\qquad u(z) \mapsto u(z) \otimes 1 + 1 \otimes u(z),\qquad u \in \g,$$  and if we identify $V^{l+1}(\g)$ with its image inside $V^l(\g)\otimes L_1(\g)$, $\text{Com}(V^{l+1}(\g),V^l(\g)\otimes L_1(\g))$ is well defined. The following result was proven in \cite{ACL}.

\begin{theorem} \label{ACLmain}
Let $\g$ be simply laced and let $k,l$ be complex numbers related by \begin{equation*} k+h^{\vee}=\frac{l+h^{\vee}}{l+h^{\vee}+1},\end{equation*} where $h^{\vee}$ is the dual Coxeter number of $\g$. 
\begin{enumerate}
\item For generic values of $k$, we have a vertex algebra isomorphism 
$$\W^{k}(\g)\cong  \text{Com}(V^{l+1}(\g),V^l(\g)\otimes L_1(\g)),$$ and  $V^{l+1}(\g)$ and  $\W^{k}(\g)$ form a dual pair inside $ V^l(\g)\otimes L_1(\g)$.
\item Suppose that $l$ is an admissible level for $\hat{\g}$. Then $k$ defined above is a non-degenerate admissible level for $\hat{\g}$ so that $\W_{k}(\g)$ is a minimal series $\W$-algebra. We have a vertex algebra isomorphism $$\W_{k}(\g)\cong  \text{Com}(L_{l+1}(\g),L_l(\g)\otimes L_1(\g)),$$ and  $L_{l+1}(\g)$ and  $\W_{k}(\g)$ form a dual pair  in $L_l(\g)\otimes L_1(\g)$. 
\end{enumerate}
\end{theorem}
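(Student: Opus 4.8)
The plan is to realize both sides inside a common free-field (Heisenberg--lattice) vertex algebra and to match them through intersections of kernels of screening operators. On the $\W$-algebra side I would invoke the Feigin--Frenkel realization, which is the principal, $\g_{\frac{1}{2}}=0$ specialization of the Miura description of the image of $\mu$ recalled above: it embeds $\W^{k}(\g)$ into a rank-$r$ Heisenberg vertex algebra $\pi$, with $r=\mathrm{rank}\,\g$, as the joint kernel $\bigcap_{i=1}^{r}\Ker\int S_i(z)\,dz$ of the $r$ screening operators attached to the simple roots, whose momenta depend on $k+h^{\vee}$. On the coset side I would combine the Wakimoto free-field realization of $V^{l}(\g)$ with the lattice realization $L_1(\g)\cong V_Q$, where $Q$ is the root lattice (available precisely because $\g$ is simply laced), so that $V^{l}(\g)\otimes L_1(\g)$ sits inside an explicit free-field algebra built from $\beta\gamma$-systems, a Heisenberg factor, and $V_Q$.

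Next I would analyze the diagonal $V^{l+1}(\g)$ in two stages. It contains a rank-$r$ Cartan Heisenberg $\Hc$, and the coset of $\Hc$ inside $V^{l}(\g)\otimes L_1(\g)$ is, by standard lattice/Heisenberg coset techniques, again governed by a Heisenberg carrying a family of screening operators; imposing in addition commutation with the root currents of $V^{l+1}(\g)$ forces the vanishing of exactly this family. The content of the computation is to check that the resulting screening momenta coincide with the Feigin--Frenkel ones under the stated relation, which rearranges to $\frac{1}{k+h^{\vee}}=1+\frac{1}{l+h^{\vee}}$ -- the characteristic ``shift-by-one'' of a level-$1$ coset, generalizing the Goddard--Kent--Olive construction of the minimal Virasoro series in the case $\g=\slf_2$. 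This yields a vertex algebra homomorphism $\W^{k}(\g)\to\Com(V^{l+1}(\g),V^{l}(\g)\otimes L_1(\g))$, injective for generic $k$ because $\W^{k}(\g)$ is then simple.

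The hard part will be surjectivity, equivalently showing the coset has no elements beyond the screening kernel. I would establish this by a graded-character argument: compute the character of $\Com(V^{l+1}(\g),V^{l}(\g)\otimes L_1(\g))$ from the branching of $V^{l}(\g)\otimes L_1(\g)$ under the diagonal $V^{l+1}(\g)$, and compare it with the character of $\W^{k}(\g)$, which by the generation theorem recalled above is of type $\W(d_1,\dots,d_r)$ with $d_i=m_i+1$ the exponents-plus-one. Matching the two characters forces the coset to be strongly generated in exactly those degrees and no others, and simultaneously yields that $V^{l+1}(\g)$ and $\W^{k}(\g)$ form a dual pair. I expect this completeness-of-branching step to be the main obstacle, since a priori the coset could acquire extra generators; ruling this out seems to require either treating the coset as a one-parameter deformable family whose OPE structure constants are rational in $l$ and pinning it down at a convenient value, or a direct Zhu-algebra/associated-variety estimate.

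Finally, for part (2) I would pass to simple quotients. Under the stated relation, admissibility of $l$ forces $k$ to be a non-degenerate admissible level, so $\W_{k}(\g)$ is a minimal series $\W$-algebra and is lisse and rational by Arakawa's theorems. Since $L_1(\g)$ is simple and $L_l(\g)$ is the simple quotient of $V^{l}(\g)$, the algebra $L_l(\g)\otimes L_1(\g)$ is the simple quotient of $V^{l}(\g)\otimes L_1(\g)$, and I would argue that the dual pair of part (1) descends: the coset of the simple $L_{l+1}(\g)$ is identified with the simple quotient of $\W^{k}(\g)$, namely $\W_{k}(\g)$. Here the rationality of $\W_{k}(\g)$ makes the relevant decomposition completely reducible, which is what lets the commutant computation survive the passage to simple quotients and delivers the second dual pair inside $L_l(\g)\otimes L_1(\g)$.
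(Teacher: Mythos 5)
A preliminary remark: this paper does not actually prove Theorem \ref{ACLmain}. The result is quoted from \cite{ACL} (``The following result was proven in \cite{ACL}''), so your proposal can only be measured against the proof given there and against the analogous arguments this paper does carry out for $n=2$ (Theorem \ref{thm:n=2main} and its Corollary). In outline, your part (1) does follow the strategy of \cite{ACL}: the Feigin--Frenkel screening realization of $\W^k(\g)$, the Wakimoto realization of $V^l(\g)$ tensored with the lattice realization $L_1(\g)\cong V_Q$ ($Q$ the root lattice, which is where simply-lacedness enters), reduction modulo the diagonal Cartan Heisenberg, and the ``shift-by-one'' level arithmetic are all the right ingredients.

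The difficulty is that at the two load-bearing points you assert rather than prove, and your proposed fallbacks would not close the argument. First, the claim that commuting with the diagonal root currents is \emph{equivalent} to lying in the joint kernel of exactly the Feigin--Frenkel screenings with the shifted momenta is the technical core of \cite{ACL}; the inclusion ``coset $\subseteq$ kernel of screenings'' is precisely the completeness problem you defer, so nothing has been reduced. Second, the character route to surjectivity does not work as stated: the character information you invoke (\cite{Ar1} combined with the branching functions of \cite{KW2}) pertains to minimal-series/admissible levels, and --- as this paper itself emphasizes --- that character matching was known \emph{before} \cite{ACL} and provided only ``strong evidence,'' precisely because it does not control the coset. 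At generic level the character of $\Com(V^{l+1}(\g),V^l(\g)\otimes L_1(\g))$ is not known a priori; computing it is essentially the problem to be solved. The rigorous substitute is the deformable-family argument: identify the $l\to\infty$ limit of the coset as an explicit orbifold of a free-field algebra, extract strong generators there, and transfer strong generation back to generic $l$ via Corollary 6.12 of \cite{CL}. This is exactly how the present paper proves Theorem \ref{thm:n=2main}; your phrase ``pinning it down at a convenient value'' would have to mean this infinite-level degeneration, not a finite special value, and the step is not carried out. Third, the dual-pair assertion is a separate double-commutant statement, $V^{l+1}(\g)=\Com(\W^{k}(\g),V^l(\g)\otimes L_1(\g))$, which does not follow from matching the vacuum isotypic component; it requires the full decomposition of $V^l(\g)\otimes L_1(\g)$ as a $V^{l+1}(\g)\otimes\W^k(\g)$-module.

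In part (2) there are two further gaps. Part (1) is a statement for \emph{generic} $k$, while admissible levels are rational and hence not generic; before ``passing to simple quotients'' you must show the generic isomorphism specializes to the given admissible $l$, e.g.\ by working over polynomial rings in formal levels as in this paper's Corollary following Theorem \ref{thm:n=2main}. Moreover, the complete reducibility you need is that of $L_l(\g)\otimes L_1(\g)$ as a module over (the image of) $L_{l+1}(\g)$, which follows from Arakawa's semisimplicity theorem at the admissible level $l+1$ (if $l+h^\vee=p/q$ is admissible then so is $l+1+h^\vee=(p+q)/q$); deducing it from the rationality of $\W_k(\g)$ is circular, since at that stage you do not yet know the coset \emph{is} $\W_k(\g)$. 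You also omit the (easy but necessary) verification that $k+h^\vee=p/(p+q)$ is a non-degenerate admissible level, and the proof that the image of $V^{l+1}(\g)$ in $L_l(\g)\otimes L_1(\g)$ is simple and that the resulting coset is simple.
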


This was conjectured in \cite{BBSS} in the case of discrete series, which correspond to the case $k \in \mathbb{N}$, and by Kac and Wakimoto \cite{KW1,KW2} for arbitrary minimal series $\W$-algebras. The conjectural character formula of \cite{FKW} for minimal series representations of $\W$-algebras that was proved in \cite{Ar1}, together with the character formula of \cite{KW2} of branching rules, proves the matching of characters. This provided strong evidence for the conjecture before \cite{ACL} appeared. Theorem \ref{ACLmain} immediately implies the unitarity of the discrete series $\W$-algebras. It also has the following striking application. Suppose that $\g$ is simply laced and $k$ is an admissible level for $\hat{\g}$. In \cite{CHY,C}, a certain subcategory of the Bernstein-Gelfand-Gelfand category $\mathcal{O}$ for the corresponding affine Lie algebra $\hat{\g}$ was considered. Using Theorem \ref{ACLmain}, it was shown that this category has a natural ribbon vertex tensor category structure, which is modular under some mild arithmetic conditions on $k$. This gives the first examples of such modular tensor categories for vertex algebras that are not lisse.

Since Theorem \ref{ACLmain} has many applications, it is an important problem to find coset realizations of other families of $\W$-(super)algebras. One such family involves the $\W$-superalgebra $\W^k(\slf(n+1|n))$ associated to the principal nilpotent element in the even part of $\slf(n+1|n)$. It was first suggested in the physics literature by Ito \cite{I1, I} that for generic values of $k$, this should be isomorphic to a certain coset vertex algebra called the $SU(n+1) / SU(n) \times U(1)$ coset model. It is an example of a general family of cosets constructed by Kazama and Suzuki in \cite{KS} which have $N=2$ superconformal symmetry. We give a slightly different construction of this coset which we denote by $C^l(n)$, where $k$ and $l$ are related by $(k + 1) (l + n + 1) = 1$; see Lemma \ref{ks:alternative}. In this paper, we shall call this expected isomorphism {\it Ito's conjecture}; it was previously stated in this form in the introduction of \cite{CL}. It clearly holds in the case $n=1$, since both $\W^k(\slf(2|1))$ and $C^l(1)$ are known to be isomorphic to the $N=2$ superconformal vertex algebra \cite{KRW,CL}. Our main result in this paper is that Ito's conjecture holds in the first nontrivial case $n=2$. As a corollary, we show that the simple $\W$-superalgebras $\W_k(\slf(3|2))$ are lisse and rational, where $(k+1)(l+3)=1$ and $l>1$ is a positive integer.

\section{Free field realization of $\W^k(\slf(n+1|n))$}\label{Introduction sec}
We introduce free field realizations of the $\W$-superalgebra $\W^k(\slf(n+1|n))$ associated to the principal nilpotent element in the even part of $\slf(n+1|n)$. In this paper, we follow the same notations for vertex (super)algebras as in Section 2 in \cite{CL}. Let $\{e_{i,j}\}_{i,j=1}^{2n+1}$ be the standard basis of $\gl(n+1|n)$. Denote by $\widetilde{\h}=\bigoplus_{i=1}^{2n+1}\C e_{i,i}$ a Cartan subalgebra of $\gl(n+1|n)$ and by $\widetilde{\h}^*=\bigoplus_{i=1}^{2n+1}\C\epsilon_i$ the dual of $\widetilde{\h}$, where $\{\epsilon_i\}_{i=1}^{2n+1}$ is the dual basis of $\widetilde{\h}^*$, i.e. $\epsilon_i(e_{j,j})=\delta_{i,j}$. Let $\h=\{ h\in\widetilde{\h}\mid\str(h)=0\}$ be a Cartan subalgebra of $\slf(n+1|n)\subset\gl(n+1|n)$, where $\str$ denotes the super trace. We identify $\h^*$ with $\h$ by the super trace and denote by $\h^*\ni\lambda\mapsto t_\lambda\in\h$, i.e. $\lambda(h)=\str(t_\lambda h)$ for all $h\in\h$. Set a non-degenerate symmetric bilinear form $(\lambda|\lambda')=\str(t_\lambda t_{\lambda'})$ on $\h^*$ for $\lambda,\lambda'\in\h^*$. Then $\Delta=\{\epsilon_i-\epsilon_j\mid1\leq i\neq j\leq2n+1\}$ is the root system of $\slf(n+1|n)$ associated with $\h$. Define a set $\Pi=\{\alpha_i\mid i=1,\ldots,2n\}$ of simple roots by $\alpha_{2i-1}=\epsilon_i-\epsilon_{i+n+1}$ and $\alpha_{2i}=\epsilon_{i+n+1}-\epsilon_{i+1}$ for $i=1,\ldots,n$. Since $(\alpha_i|\alpha_j)=(-1)^{i+1}\delta_{j,i+1}$ for all $i\leq j$, all simple roots are odd and isotropic. Fix a root vector $e_{\epsilon_i-\epsilon_j}=e_{i,j}$ for $i\neq j$. Let
\begin{align*}
f=f_\prin:=\sum_{i=1}^{2n-1}e_{-\alpha_i-\alpha_{i+1}}=\sum_{\begin{subarray}{c} 1\leq i \leq 2n \\ i\neq n+1\end{subarray}}e_{i+1,i}
\end{align*}
be a principal nilpotent element in the even part of $\slf(n+1|n)$, and
\begin{align*}
x:=\sum_{i=1}^{n}\frac{i}{2}(t_{\alpha_{2i}}+t_{\alpha_{2n-2i+1}})=\sum_{i=1}^{n+1}\left(\frac{n}{2}-i+1\right)e_{i,i}+\sum_{i=1}^{n}\left(\frac{n}{2}-i+\frac{1}{2}\right)e_{i+n+1,i+n+1}
\end{align*}
a semisimple element of $\slf(n+1|n)$ in $\h$. Then $\ad(x)$ defines a good grading $\Gamma=\Gamma_x$ on $\slf(n+1|n)$ for $f_\prin$ such that all positive roots has non-negative degree. The corresponding weighted Dynkin diagram is the following:
\vspace{1mm}
\begin{align*}
\hspace{30mm}\\
\setlength{\unitlength}{1mm}
\begin{picture}(0, 0)(20,10)
\put(-31.5,10){\circle{2}}
\put(-32.6,9.3){\footnotesize$\times$}
\put(-32.8,14){\footnotesize$\frac{1}{2}$}
\put(-32.5,5){\footnotesize$\alpha_1$}
\put(-30.5,10.3){\line(1,0){8}}
\put(-21.5,10){\circle{2}}
\put(-22.6,9.3){\footnotesize$\times$}
\put(-22.8,14){\footnotesize$\frac{1}{2}$}
\put(-22.5,5){\footnotesize$\alpha_2$}
\put(-20.5,10.3){\line(1,0){6}}
\put(-13,9.4){$\cdot$}
\put(-11.5,9.4){$\cdot$}
\put(-10,9.4){$\cdot$}
\put(-7,10.3){\line(1,0){6}}
\put(0,10){\circle{2}}
\put(-1.1,9.3){\footnotesize$\times$}
\put(-1.3,14){\footnotesize$\frac{1}{2}$}
\put(-0.8,5){\footnotesize$\alpha_{2n-1}$}
\put(1,10.3){\line(1,0){8}}
\put(10,10){\circle{2}}
\put(8.9,9.3){\footnotesize$\times$}
\put(8.7,14){\footnotesize$\frac{1}{2}$}
\put(7.9,5){\footnotesize$\alpha_{2n}$}
\put(12,9){.}
\end{picture}\\
\hspace{60mm}
\end{align*}
\vspace{2mm}\\
See \cite{Hoyt} for good gradings of $\slf(n+1|n)$. We have $\g_0=\h$ and $\tau_k(u|v)=(k+1)\str(uv)$ for all $u,v\in\h$. Thus, $\Hc=\Vtau$ is the Heisenberg vertex algebra generated by fields $\alpha_i(z)$ of conformal degree $1$ for $i=1,\ldots,2n$ which satisfy that
\begin{align*}
\alpha_i(z)\alpha_j(w)\sim\frac{(-1)^{i+1}(k+1)\delta_{j,i+1}}{(z-w)^2}
\end{align*}
for all $i\leq j$. Since $\g_{\frac{1}{2}}=\bigoplus_{i=1}^{2n}\C e_{\alpha_i}$ and $\str(f[e_{\alpha_i},e_{\alpha_j}])=(-1)^{i+1}\delta_{j,i+1}$ for all $i\leq j$, $\Fne$ is the vertex superalgebra generated by odd fields $\Phi_i(z)=\Phi_{\alpha_i}(z)$ of conformal degree $\frac{1}{2}$ which satisfy that
\begin{align*}
\Phi_i(z)\Phi_j(w)\sim\frac{(-1)^{i+1}\delta_{j,i+1}}{z-w}
\end{align*}
for all $i\leq j$. By \cite{G}, for generic $k$, we have an isomorphism
\begin{align}\label{scr eq}
\W^k(\slf(n+1|n)):=\W^k(\slf(n+1|n),f_\prin;\Gamma_x)\simeq\bigcap_{i=1}^{2n}\Ker\scr_i,
\end{align}
where
\begin{align*}
\scr_i=\int:\e^{-\frac{1}{k+1}\int\alpha_i(z)}\Phi_i(z):dz
\end{align*}
for all $i$ are screening operators acting on $\Hc\otimes\Fne$. If $k\neq-1$, $\W^k(\slf(n+1|n))$ is conformal with central charge $-3n(kn+k+n)$.

Replacing $k$ by a indeterminate $\bk$ in $T=\C[\bk,(\bk+1)^{-\frac{1}{2}}]$, we define the $\W$-algebra $\W^\bk(\slf(n+1|n))$ over $T$, which also satisfies \eqref{scr eq}. Let
\begin{align*}
G_+^{(n)}&=\frac{1}{\sqrt{\bk+1}}\sum_{i=1}^n\left(\sum_{j=i}^{n}:\alpha_{2i-1}\Phi_{2j}:+i(\bk+1)\der\Phi_{2i}\right),\\
G_-^{(n)}&=\frac{1}{\sqrt{\bk+1}}\sum_{i=1}^n\left(\sum_{j=1}^{i}:\alpha_{2i}\Phi_{2j-1}:+(n-i+1)(\bk+1)\der\Phi_{2i-1}\right),\\
H^{(n)}&=(G_+^{(n)})_{(1)}(G_-^{(n)}),\quad
L^{(n)}=(G_+^{(n)})_{(0)}(G_-^{(n)})-\frac{1}{2}H^{(n)}.
\end{align*}
Then $H^{(n)}(z),G_\pm^{(n)}(z),L^{(n)}(z)$ generate a copy of $N=2$ superconformal vertex algebra, and belong to $\bigcap_{i=1}^{2n}\Ker\scr_i$, which has been shown in \cite[Section 2.2]{I}. Since the specialization of $\W^\bk(\slf(n+1|n))$ at $\bk=k\in\C\backslash\{-1\}$ coincides with $\W^k(\slf(n+1|n))$, $\W^k(\slf(n+1|n))$ has a copy of $N=2$ algebra. As shown in Section 7 in \cite{KRW}, in the case $n=1$, $\W^k(\slf(2|1))$ is just isomorphic to the $N=2$ algebra.

\section{The Kazama-Suzuki coset}
In \cite{KS}, Kazama and Suzuki constructed a class of coset vertex algebras called $G/H$ models that are attached to a Lie group $G$ and a closed subgroup $H \subset G$. Let $\g$ and $\h$ be the Lie algebras of $G$ and $H$, respectively, and let $\F(\g)$ and $\F(\h)$ denote the free fermion algebras attached to $\g$ and $\h$. If $\g$ is simple, there is a homomorphism $V^{h^{\vee}}(\g) \rightarrow \F(\g)$ sending $u(z)\mapsto \tilde{u}(z)$, so there is a diagonal homomorphism $V^{k}(\g) \rightarrow V^{k-h^{\vee}}(\g) \otimes \F(\g)$. Replacing the generators $u(z)$ of $V^{k-h^{\vee}}(\g)$ with the diagonal generators $u(z)\otimes 1 + 1 \otimes \tilde{u}(z)$, we see that $V^{k-h^{\vee}}(\g) \otimes \F(\g)$ is isomorphic to the vertex superalgebra with OPE relations given by (2.4)-(2.6) of \cite{KS}. We denote this vertex superalgebra by $SV^k(\g)$, and we observe that both $V^k(\g)$ and  $\F(\g)$ are subalgebras of $SV^k(\g)$. Additionally, $SV^k(\g)$ has an action of the $N=1$ superconformal vertex algebra given by (2.23)-(2.24) of \cite{KS}. In fact, this construction makes sense for any finite-dimensional $\g$ with a nondegenerate symmetric invariant bilinear form, not necessarily simple.

The embedding $\h \hookrightarrow \g$ induces a homomorphism $SV^{k'}(\h) \rightarrow SV^{k}(\g)$, where the relationship between the levels $k,k'$ is determined from this embedding, and the $G/H$ coset model is just the coset $\text{Com}(SV^{k'}(\h), SV^k(\g))$. These cosets have an $N=1$ superconformal symmetry which is given explicitly by (2.37) and (2.42) of \cite{KS}, and the authors also determined the precise conditions under which this is enhanced to $N=2$ symmetry.

The relevant case here corresponds to $G = SU(n+1)$ and $H = SU(n) \times U(1)$, which has $N=2$ symmetry. In this case, there is a slightly simpler construction of the same coset, which we describe below; see also Example 7.11 of \cite{CL}. Let $\E$ be the $bc$-system, i.e., the vertex superalgebra generated by odd fields $b(z)$ and $c(z)$ which satisfy
\begin{align*}
b(z)c(w)\sim\frac{1}{z-w},\quad
b(z)b(w)\sim 0 \sim c(z)c(w).
\end{align*} Set $\E(n)=\E^{\otimes n}$, and denote by $b_i(z), c_i(z)$ the generating fields $b(z), c(z)$ of the $i$-th component in $\E(n)$. Then $b_i(z), c_i(z)$ are both primary of conformal degree $\frac{1}{2}$ with respect to the Virasoro element
$$L(z) = -\frac{1}{2}\sum_{i=1}^n \big( :b_i(z) \partial c_i(z): - :(\partial b_i(z))c_i(z):\big),$$ which has central charge $n$.
We have a $V^1(\gl_n)$-module structure on $\E(n)$ defined by $$e_{i,j}(z)\mapsto \ :b_i(z)c_j(z):,$$ which in fact descends to an action of the simple quotient $L_1(\gl_n)$. Here $\{ e_{i,j}\}_{i,j=1}^n$ denotes the standard basis of $\gl_n$. Set $h_i=e_{i,i}-e_{i+1,i+1}$. Since $V^l(\gl_n)\subset V^l(\slf_{n+1})$ by
\begin{equation}\label{eq:embedding}
\begin{split}
e_{i,j}(z)&\mapsto e_{i,j}(z)\quad(i\neq j),\quad
h_i(z)\mapsto h_i(z),\quad (i=1,\ldots,n-1),
\\ \sum_{i=1}^{n+1}e_{i,i}(z)&\mapsto \varpi_n(z)=\frac{1}{n+1}\sum_{i=1}^n i h_i(z),
\end{split}
\end{equation}
we have an embedding $V^{l+1}(\gl_n)\hookrightarrow V^l(\slf_{n+1})\otimes\E(n)$ defined by the diagonal action, where $\varpi_n$ is a semisimple element corresponding to the $n$-th fundamental weight of $\slf_{n+1}$.

\begin{lemma} \label{lem:commtensor} For any vertex (super)algebras $\mathcal{A}$, $\mathcal{B}$, and $\mathcal{C}$ with $\mathcal{A} \subset \mathcal{B}$, we have 
\begin{equation*} \text{Com}(\mathcal{A}\otimes \mathcal{C}, \mathcal{B} \otimes \mathcal{C}) = \text{Com}(\mathcal{A}, \mathcal{B}) \otimes \text{Com}(\mathcal{C}, \mathcal{C}).\end{equation*} In particular, $\text{Com}(\mathcal{A}\otimes \mathcal{C}, \mathcal{B} \otimes \mathcal{C}) = \text{Com}(\mathcal{A}, \mathcal{B})$ whenever $\mathcal{C}$ has trivial center.
\end{lemma}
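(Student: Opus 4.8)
The plan is to reduce the computation of the commutant to a statement about a generating set of $\mathcal{A}\otimes\mathcal{C}$, and then to use the tensor-product structure of $\mathcal{B}\otimes\mathcal{C}$ together with an elementary fact about intersections of tensor products of subspaces.

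First I would record that, for a fixed $v\in\mathcal{V}$, the centralizer $Z_v=\{a\in\mathcal{V}\mid a_{(m)}v=0,\ \forall m\geq 0\}$ is a vertex subalgebra. The only nontrivial point is closure under the products $a_{(n)}b$, which follows directly from the Borcherds (iterate) identity
\[(a_{(n)}b)_{(m)}v=\sum_{j\geq 0}(-1)^j\binom{n}{j}\Big(a_{(n-j)}b_{(m+j)}v-(-1)^{n}(-1)^{|a||b|}b_{(n+m-j)}a_{(j)}v\Big),\]
since for $m\geq 0$ every summand vanishes: in the first term $m+j\geq 0$ forces $b_{(m+j)}v=0$, and in the second term $j\geq 0$ forces $a_{(j)}v=0$. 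Consequently, if $S$ is any generating set of a subalgebra $\mathcal{A}$, then $v\in\text{Com}(\mathcal{A},\mathcal{V})$ as soon as $[s(z),v(w)]=0$ for all $s\in S$, because $Z_v\supseteq S$ implies $Z_v\supseteq\langle S\rangle=\mathcal{A}$. Applying this with $S=(\mathcal{A}\otimes 1)\cup(1\otimes\mathcal{C})$, which generates $\mathcal{A}\otimes\mathcal{C}$, reduces the problem to intersecting the commutant of $\mathcal{A}\otimes 1$ with that of $1\otimes\mathcal{C}$ inside $\mathcal{B}\otimes\mathcal{C}$.

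Next I would compute these two commutants separately using the tensor-product products $(a\otimes 1)_{(n)}(x\otimes y)=(a_{(n)}x)\otimes y$ and $(1\otimes c)_{(n)}(x\otimes y)=(-1)^{|c||x|}x\otimes(c_{(n)}y)$. Writing a general element of $\mathcal{B}\otimes\mathcal{C}$ as $\sum_i x_i\otimes y_i$ with the $y_i$ linearly independent, the vanishing of $(a\otimes 1)_{(n)}(\sum_i x_i\otimes y_i)=\sum_i(a_{(n)}x_i)\otimes y_i$ for all $a\in\mathcal{A}$ and $n\geq 0$ forces $a_{(n)}x_i=0$ for each $i$; hence $\text{Com}(\mathcal{A}\otimes 1,\mathcal{B}\otimes\mathcal{C})=\text{Com}(\mathcal{A},\mathcal{B})\otimes\mathcal{C}$. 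Symmetrically, now taking the $x_i$ linearly independent, $\text{Com}(1\otimes\mathcal{C},\mathcal{B}\otimes\mathcal{C})=\mathcal{B}\otimes\text{Com}(\mathcal{C},\mathcal{C})$, the per-index signs $(-1)^{|c||x_i|}$ being harmless. The desired commutant is the intersection of these two subspaces.

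Finally I would invoke the elementary linear-algebra identity that for subspaces $U\subseteq\mathcal{B}$ and $W\subseteq\mathcal{C}$ one has $(U\otimes\mathcal{C})\cap(\mathcal{B}\otimes W)=U\otimes W$ (immediate by extending a basis of $U$ to one of $\mathcal{B}$ and a basis of $W$ to one of $\mathcal{C}$). Taking $U=\text{Com}(\mathcal{A},\mathcal{B})$ and $W=\text{Com}(\mathcal{C},\mathcal{C})$ gives the claimed equality. The last assertion then follows at once: if $\mathcal{C}$ has trivial center, i.e. $\text{Com}(\mathcal{C},\mathcal{C})=\mathbb{C}1$, the right-hand side is $\text{Com}(\mathcal{A},\mathcal{B})\otimes\mathbb{C}1\cong\text{Com}(\mathcal{A},\mathcal{B})$. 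I expect no genuine obstacle in this argument; the only points requiring care are the super-signs in the iterate identity and in the tensor-product products, and the bookkeeping of choosing the tensor factors $x_i$ or $y_i$ linearly independent so that each of the two commutant computations is clean.
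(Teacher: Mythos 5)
Your proof is correct, and its engine is the same as the paper's: write an element of $\mathcal{B}\otimes \mathcal{C}$ as $\sum_i x_i \otimes y_i$ with one set of tensor factors linearly independent, and use that independence to push the commutation condition onto the other factor. The scaffolding, however, is genuinely different. The paper argues element-wise on a single $\omega$ in the big commutant: first with the $\mathcal{B}$-components linearly independent to conclude the $\mathcal{C}$-components lie in $\Com(\mathcal{C},\mathcal{C})$, then rewriting $\omega$ over a basis of the span of those $\mathcal{C}$-components to conclude the new $\mathcal{B}$-components lie in $\Com(\mathcal{A},\mathcal{B})$; the reverse inclusion is declared obvious. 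You instead factor the argument through three reusable general facts: (i) the centralizer $Z_v$ is a vertex subalgebra (via the iterate identity), so commuting with a generating set suffices, which gives the equality $\Com(\mathcal{A}\otimes\mathcal{C},\mathcal{B}\otimes\mathcal{C})=\Com(\mathcal{A}\otimes 1,\mathcal{B}\otimes\mathcal{C})\cap\Com(1\otimes\mathcal{C},\mathcal{B}\otimes\mathcal{C})$ and subsumes the paper's ``obvious'' inclusion; (ii) the one-sided computations $\Com(\mathcal{A}\otimes 1,\mathcal{B}\otimes\mathcal{C})=\Com(\mathcal{A},\mathcal{B})\otimes\mathcal{C}$ and $\Com(1\otimes\mathcal{C},\mathcal{B}\otimes\mathcal{C})=\mathcal{B}\otimes\Com(\mathcal{C},\mathcal{C})$; (iii) the linear-algebra identity $(U\otimes\mathcal{C})\cap(\mathcal{B}\otimes W)=U\otimes W$, which plays exactly the role of the paper's basis-rewriting step. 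What each buys: your version is more modular, yields both inclusions simultaneously, and items (i)--(ii) are facts of independent use; the paper's version is shorter and never needs the Borcherds identity or any statement about generating sets, since for the hard inclusion it only uses commutation with the subsets $\mathcal{A}\otimes 1$ and $1\otimes\mathcal{C}$, which already lie inside $\mathcal{A}\otimes\mathcal{C}$. Your handling of the super-signs and of the equivalence between $[a(z),v(w)]=0$ and $a_{(n)}v=0$ for $n\geq 0$ is implicit but standard, and introduces no gap.
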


\begin{proof} The inclusion $ \text{Com}(\mathcal{A}, \mathcal{B}) \otimes \text{Com}(\mathcal{C}, \mathcal{C}) \subset \text{Com}(\mathcal{A}\otimes \mathcal{C}, \mathcal{B} \otimes \mathcal{C})$ is obvious, so let $ \omega \in \text{Com}(\mathcal{A}\otimes \mathcal{C}, \mathcal{B} \otimes \mathcal{C})$. Without loss of generality, we may write \begin{equation*} \omega = \sum_i b_i \otimes c_i,\end{equation*} where $\{b_i\}$ is a linearly independent subset of $\mathcal{B}$, and $c_i \in \mathcal{C}$. Since $\omega$ commutes with $\mathcal{C}$ and each $b_i$ commutes with $\mathcal{C}$, it follows from the linear independence of $\{b_i\}$ that each $c_i \in \text{Com}(\mathcal{C},\mathcal{C})$.

Next, let $\{c'_j\}$ be any basis for $\text{Span}(\{c_i\})$, and rewrite $\omega$ in the form 
\begin{equation*} \omega = \sum_j b'_j \otimes c'_j, \end{equation*} where $b'_j$ are appropriate linear combinations of $\{b_i\}$. Clearly each $c'_j \in \text{Com}(\mathcal{C},\mathcal{C})$. Since $\omega$ commutes with $\mathcal{A}$ and each $c'_j$ commutes with $\mathcal{A}$, it follows from the linear independence of $\{c'_j\}$ that each $b'_j \in \text{Com}(\mathcal{A},\mathcal{B})$. Therefore $\omega \in  \text{Com}(\mathcal{A}, \mathcal{B}) \otimes \text{Com}(\mathcal{C}, \mathcal{C})$.
\end{proof}

\begin{lemma} \label{ks:alternative} The coset vertex superalgebra
\begin{align*}
C^l(n):=\Com(V^{l+1}(\gl_n), V^l(\slf_{n+1})\otimes\E(n)),
\end{align*}
which has central charge $\displaystyle c = \frac{3n l}{1+n+l}$, is isomorphic to the $SU(n+1)/ SU(n) \times U(1)$ coset $\text{Com}(SV^{k'}(\gl_n), SV^k(\slf_{n+1}))$ in  \cite{KS}, where $k' = k$ and $l = k - n - 1$. 
\end{lemma}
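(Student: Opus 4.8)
The plan is to trace through the Kazama-Siegel construction and peel off the fermionic factor attached to $\gl_n$ using Lemma \ref{lem:commtensor}. Recall that by construction $SV^k(\slf_{n+1}) = V^{l}(\slf_{n+1}) \otimes \F(\slf_{n+1})$ with $l = k-(n+1)$, where the diagonal copy of $V^{k}(\slf_{n+1})$ is generated by $u(z)\otimes 1 + 1\otimes \tilde u(z)$ and the fermionic current $u\mapsto \tilde u$ realizes $V^{n+1}(\slf_{n+1})$ inside $\F(\slf_{n+1})$. First I would fix the decomposition of the adjoint representation under the embedded $\gl_n$: as $\gl_n$-modules, $\slf_{n+1}=\gl_n\oplus\C^n\oplus(\C^n)^*$, the last two summands being the off-diagonal blocks of size $n$. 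Splitting the free fermions accordingly yields an isomorphism of vertex superalgebras $\F(\slf_{n+1})\cong \F(\gl_n)\otimes\E(n)$, where $\E(n)$ is the $bc$-system on the $2n$ off-diagonal fermions. This identification is $\gl_n$-equivariant: the fermionic $\gl_n$-current on $\F(\slf_{n+1})$ restricts to the sum of the level-$n$ current on $\F(\gl_n)$ and the level-$1$ current $e_{i,j}\mapsto\ :b_ic_j:$ on $\E(n)$.

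Next I would remove the $\F(\gl_n)$ factor. Writing $SV^k(\slf_{n+1}) = V^l(\slf_{n+1})\otimes\F(\gl_n)\otimes\E(n)$, the factor $\F(\gl_n)$ commutes with $V^l(\slf_{n+1})\otimes\E(n)$ and has trivial center, so Lemma \ref{lem:commtensor} gives $\Com(\F(\gl_n),SV^k(\slf_{n+1})) = V^l(\slf_{n+1})\otimes\E(n)$. The image of $SV^{k'}(\gl_n)$ inside $SV^k(\slf_{n+1})$ is generated by $\F(\gl_n)$ together with the diagonal $\gl_n$-currents $J^a = u^a + \tilde u^a_{\F(\gl_n)} + \tilde u^a_{\E(n)}$ for $a\in\gl_n$, whence
\begin{equation*}
\Com(SV^{k'}(\gl_n),SV^k(\slf_{n+1})) = \{v\in V^l(\slf_{n+1})\otimes\E(n)\mid [J^a(z),v(w)]=0\ \text{for all}\ a\}.
\end{equation*}
On $V^l(\slf_{n+1})\otimes\E(n)$ the summand $\tilde u^a_{\F(\gl_n)}$ acts trivially (it lies in a disjoint even tensor factor), so commuting with $J^a$ is equivalent to commuting with $u^a + \tilde u^a_{\E(n)}$, which is precisely the diagonal image of $V^{l+1}(\gl_n)$ under \eqref{eq:embedding}. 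Therefore the Kazama-Suzuki coset is identified with $C^l(n)$.

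It then remains to record the level and central-charge bookkeeping. The diagonal $\gl_n$-current decomposes as $l$ (from $V^l(\slf_{n+1})$) $+\,n$ (from $\F(\gl_n)$) $+\,1$ (from $\E(n)$), summing to the level $k$ of the diagonal current in $SV^k(\slf_{n+1})$; this gives $k = l+n+1$ and $k'=k$, matching the stated relations. The central charge follows from the Sugawara formula applied to the coset:
\begin{equation*}
c = \frac{l(n^2+2n)}{l+n+1} + n - \frac{(l+1)(n^2-1)}{l+n+1} - 1 = \frac{3nl}{1+n+l},
\end{equation*}
where the first two terms are $c(V^l(\slf_{n+1}))+c(\E(n))$ and the last two are $c(V^{l+1}(\slf_n))+c$(Heisenberg)$\,=c(V^{l+1}(\gl_n))$.

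I expect the main obstacle to be the normalization checks underlying the first step: one must verify that the restriction of the invariant form of $\slf_{n+1}$ to the embedded $\gl_n$—whose central direction maps to $\varpi_n$ in \eqref{eq:embedding}—agrees with the form used to define $\F(\gl_n)$ and $V^{l+1}(\gl_n)$, so that the splitting $\F(\slf_{n+1})\cong\F(\gl_n)\otimes\E(n)$ holds on the nose rather than up to rescaling and the three level contributions add correctly, including on the $\gl_1$-direction. Once these normalizations are pinned down, the remaining identifications are formal consequences of Lemma \ref{lem:commtensor} and \eqref{eq:embedding}.
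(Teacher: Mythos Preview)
Your proposal is correct and follows essentially the same route as the paper: decompose $\slf_{n+1}=\gl_n\oplus(\slf_{n+1}/\gl_n)$ to split $\F(\slf_{n+1})\cong\F(\gl_n)\otimes\E(n)$, then use Lemma~\ref{lem:commtensor} with the trivial center of $\F(\gl_n)$ to strip off that factor. The paper packages the second step slightly more tersely---it directly identifies the image of $SV^{k}(\gl_n)$ with $V^{l+1}(\gl_n)\otimes\F(\gl_n)$ and applies Lemma~\ref{lem:commtensor} once with $\mathcal{A}=V^{l+1}(\gl_n)$, $\mathcal{B}=V^l(\slf_{n+1})\otimes\E(n)$, $\mathcal{C}=\F(\gl_n)$---whereas you first peel off $\F(\gl_n)$ and then reduce the diagonal-current condition on the remaining factor; these are the same argument. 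Your explicit central-charge bookkeeping is a useful addition the paper omits, and your closing caveat about normalizations on the $\gl_1$-direction (via $\varpi_n$) is the right place to be careful. (Minor: it is Kazama--Suzuki, not Kazama--Siegel.)
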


\begin{proof} First, recall that we can replace $SV^k(\gl_{n})$ and $SV^k(\slf_{n+1})$ with $V^{k-n}(\gl_{n}) \otimes \F(\gl_{n})$ and $V^{k-n-1}(\slf_{n+1}) \otimes \F(\slf_{n+1})$, respectively. Using the orthogonal decomposition $\slf_{n+1} = \gl_n \oplus \slf_{n+1}/\gl_n$, we have $$SV^k(\slf_{n+1}) \cong  V^{k-n-1}(\slf_{n+1}) \otimes \F(\gl_{n}) \otimes \F(\slf_{n+1}/\gl_{n}).$$ By (4.3) of \cite{KS}, the fermion algebra $\F(\slf_{n+1}/\gl_{n})$ is isomorphic to $\E(n)$, and the map $SV^{k}(\gl_n) \rightarrow SV^k(\slf_{n+1})$ clearly restricts to the inclusion $\F(\gl_n) \hookrightarrow \F(\gl_{n}) \otimes \E(n)$ sending $a \mapsto a \otimes 1$.

We now apply Lemma \ref{lem:commtensor} to the case $\mathcal{A}=V^{l+1}(\gl_n)$, $\mathcal{B}=V^l(\slf_{n+1})\otimes\E(n)$ and $\mathcal{C}=\F(\gl_{n})$. Since $\F(\gl_{n})$ has trivial center, it follows that
\begin{equation*} \begin{split} & \text{Com}(SV^{k}(\gl_n), SV^k(\slf_{n+1})) = \text{Com}(V^{l+1}(\gl_n) \otimes \F(\gl_{n}), V^l(\slf_{n+1})\otimes\E(n) \otimes \F(\gl_{n})) 
\\ & \cong \text{Com}(V^{l+1}(\gl_n), V^l(\slf_{n+1})\otimes\E(n)) = C^l(n).\end{split} \end{equation*} \end{proof}

In view of this isomorphism, we shall call $C^l(n)$ the {\it Kazama-Suzuki coset}. It is expected to be isomorphic to $\W^k(\slf(n+1|n))$ for generic $k$, where $(k + 1) (l + n + 1) = 1$ \cite{I1}. This conjecture was first stated in this form in the introduction of \cite{CL}, and shall be called {\it Ito's conjecture} in this paper. Here we discuss some features of $C^l(n)$ that hold for all $n\geq 3$. As shown in Example 7.11 of \cite{CL}, $C^l(n)$ has a minimal strong generating set consisting of even fields in conformal degrees $$1,2,2,3,3,\dots, n,n,n+1,$$ and odd fields in conformal degrees $$\frac{3}{2}, \frac{3}{2}, \frac{5}{2}, \frac{5}{2}, \dots, \frac{2n+1}{2}, \frac{2n+1}{2}.$$ Also, the Heisenberg field, the Virasoro field, and two odd fields in conformal degree $\frac{3}{2}$ generate a copy of the $N=2$ superconformal vertex algebra. In the case $n=1$, $C^l(1)$ is just the $N=2$ algebra; see Lemma 8.6 of \cite{CL}. 

\begin{lemma} For $n\geq 2$, and generic values of $l$, we have a conformal embedding
\begin{equation} \label{eq:confemb} \mathcal{H}_0 \otimes \W^{r}(\slf_n) \otimes \mathcal{G}^l(n) \hookrightarrow C^l(n),\qquad r= -n + \frac{l + n}{1 + l + n}.\end{equation}
Here $\mathcal{H}_0$ is the rank one Heisenberg algebra, $\W^{r}(\slf_n)$ is the principal $\W$-algebra of $\slf_n$ at level $r$, and 
$$\mathcal{G}^l(n) = \Com(V^{l}(\gl_n), V^l(\slf_{n+1})),$$ which was called a generalized parafermion algebra in \cite{L}.
\end{lemma}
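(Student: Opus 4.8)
The plan is to exhibit the three claimed subalgebras inside $C^l(n)$ explicitly as cosets built from the tower $\gl_n \subset \slf_{n+1}$, and then to verify that their tensor product embeds conformally. First I would identify the rank one Heisenberg algebra $\mathcal{H}_0$ as the image of the abelian part: inside $V^{l+1}(\gl_n)$ the central $\gl_1$-direction $\varpi_n$ generates a Heisenberg algebra, and its commutant interacts with the $bc$-system $\E(n)$, which also carries a $U(1)$-current $\sum_i :b_i c_i:$. I expect $\mathcal{H}_0$ to be generated by the unique (up to scalar) combination of these two $U(1)$-currents that both commutes with the $\slf_n$-part of $V^{l+1}(\gl_n)$ and lies in $C^l(n)$, i.e. the linear combination orthogonal to the diagonal current that is already killed in forming the coset.

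Next I would produce the $\W^r(\slf_n)$ factor via a nested coset. Using the embedding $V^{l+1}(\gl_n) \hookrightarrow V^l(\slf_{n+1})\otimes \E(n)$ from \eqref{eq:embedding}, I would invoke Theorem \ref{ACLmain} in the form relating the principal $\W$-algebra of $\slf_n$ to $\Com(V^{r'+1}(\slf_n), V^{r'}(\slf_n)\otimes L_1(\slf_n))$. The rank $n$ $bc$-system $\E(n)$ contains a copy of $L_1(\gl_n)$, hence of $L_1(\slf_n)$, by the stated $V^1(\gl_n)$-action $e_{i,j}\mapsto :b_i c_j:$. The $\slf_n \subset \gl_n \subset \slf_{n+1}$ diagonal gives, after restricting levels, exactly a configuration $V^{r+1}(\slf_n) \subset V^{r}(\slf_n)\otimes L_1(\slf_n)$, whose commutant is $\W^r(\slf_n)$ with $r = -n + \frac{l+n}{1+l+n}$; the level bookkeeping is forced by $(l+1) = (l)+(1)$ on the $\gl_n$-currents and the substitution $h^\vee = n$ into the relation of Theorem \ref{ACLmain}. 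I would check that this $\W^r(\slf_n)$ commutes with $V^{l+1}(\gl_n)$, so that it genuinely lands in $C^l(n)$, and that it commutes with both $\mathcal{H}_0$ and $\mathcal{G}^l(n)$.

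The third factor $\mathcal{G}^l(n) = \Com(V^l(\gl_n), V^l(\slf_{n+1}))$ sits inside $C^l(n)$ because it already commutes with all of $V^l(\gl_n)$, hence with the diagonal $V^{l+1}(\gl_n)$ and with the $\E(n)$-currents, and it is disjoint in conformal-weight support from the fermionic fields. I would then assemble the conformal embedding by the standard commutant decomposition argument, applying Lemma \ref{lem:commtensor} repeatedly to organize $V^l(\slf_{n+1})\otimes \E(n)$ as an iterated commutant: peeling off $V^l(\gl_n)$ gives $\mathcal{G}^l(n)$, the interaction of the residual $\gl_n$-currents with $\E(n)$ produces $\mathcal{H}_0\otimes \W^r(\slf_n)$, and mutual commutativity of the three pieces follows from their disjoint origins. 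The key point for \emph{conformal} embedding is that the Virasoro elements add up correctly: I would verify $L_{C} = L_{\mathcal{H}_0} + L_{\W^r(\slf_n)} + L_{\mathcal{G}^l(n)}$ by matching central charges, using $c(\mathcal{H}_0)=1$, the known central charge of $\W^r(\slf_n)$ at the given $r$, the central charge of $\mathcal{G}^l(n)$, and the formula $c = \frac{3nl}{1+n+l}$ for $C^l(n)$ from Lemma \ref{ks:alternative}; an exact numerical match, together with the fact that each factor's conformal vector is the genuine Virasoro element of its coset (via Frenkel--Zhu), forces the embedding to be conformal.

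The main obstacle I anticipate is disentangling the two $U(1)$-currents cleanly enough to isolate $\mathcal{H}_0$ and to certify that the remaining $\slf_n$-data really closes onto a copy of $\W^r(\slf_n)$ rather than a larger algebra: the diagonal embedding \eqref{eq:embedding} mixes the $\slf_{n+1}$-currents, the $\gl_n$-currents, and the $bc$-currents, so the level shift that feeds into Theorem \ref{ACLmain} must be tracked through $\varpi_n$ and the normalization of $\tau_k$ with care. Verifying that the three proposed generating subalgebras are mutually commuting and that their central charges sum exactly to $\frac{3nl}{1+n+l}$ is the decisive computation; once that is in hand, the conformal embedding follows formally.
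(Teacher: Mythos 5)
Your structural decomposition coincides with the paper's: you realize $\mathcal{H}_0 \otimes \W^{r}(\slf_n)$ as the nested coset $\Com(V^{l+1}(\gl_n), V^l(\gl_n)\otimes \E(n))$, using that the action of $V^1(\gl_n)$ on $\E(n)$ factors through $L_1(\gl_n)$ so that Theorem \ref{ACLmain} (with $h^{\vee}=n$) applies, and you place $\mathcal{G}^l(n)$ inside $C^l(n)$ by noting that it commutes with $V^l(\gl_n)\otimes 1$ and hence with the diagonal copy of $V^{l+1}(\gl_n)$. Two small corrections to that part: the configuration fed into Theorem \ref{ACLmain} is $V^{l+1}(\slf_n) \subset V^l(\slf_n)\otimes L_1(\slf_n)$ --- the affine levels are $l+1$ and $l$, not $r+1$ and $r$; only the resulting $\W$-algebra carries the level $r$ --- and the reason $\mathcal{G}^l(n)$ commutes with the fermions is simply that $\mathcal{G}^l(n)\otimes 1$ and $1\otimes\E(n)$ lie in different tensor factors; ``disjoint in conformal-weight support'' is not a meaningful argument, since both contain fields of integer weight.

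The genuine gap is in your final step. You propose to verify $L_{C} = L_{\mathcal{H}_0} + L_{\W^{r}(\slf_n)} + L_{\mathcal{G}^l(n)}$ ``by matching central charges,'' asserting that an exact numerical match forces the embedding to be conformal. It does not: equality of central charges never implies equality of conformal vectors. For instance, $\mathcal{A}\subset \mathcal{A}\otimes Vir^0$ matches central charges without being conformal, because the universal Virasoro vertex algebra $Vir^0$ at central charge $0$ has a nonzero conformal vector; equivalently, the difference $L_{C} - \bigl(L_{\mathcal{H}_0}+L_{\W^{r}(\slf_n)}+L_{\mathcal{G}^l(n)}\bigr)$ is, by Theorems 5.1 and 5.2 of \cite{FZ}, a Virasoro vector of central charge $0$, and such vectors need not vanish outside unitary settings. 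The paper instead proves the required identity exactly, with no numerics: by \cite{FZ}, each coset Virasoro element is a difference of Sugawara-type conformal vectors, so
\begin{align*}
L_{\mathcal{G}^l(n)} &= L^{\mathrm{sug}}_{V^l(\slf_{n+1})} - L^{\mathrm{sug}}_{V^l(\gl_n)},\\
L_{\mathcal{H}_0} + L_{\W^{r}(\slf_n)} &= L^{\mathrm{sug}}_{V^l(\gl_n)} + L_{\E(n)} - L^{\mathrm{sug}}_{V^{l+1}(\gl_n)},
\end{align*}
and the sum telescopes to $L^{\mathrm{sug}}_{V^l(\slf_{n+1})} + L_{\E(n)} - L^{\mathrm{sug}}_{V^{l+1}(\gl_n)}$, which is precisely the Virasoro element of $C^l(n)$. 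Pairwise commutativity of the three factors is likewise structural: $\mathcal{G}^l(n)$ commutes with all of $V^l(\gl_n)\otimes\E(n)$, which contains $\mathcal{H}_0\otimes\W^{r}(\slf_n)$. Replace your central-charge bookkeeping with this telescoping identity and your proof is complete.
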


\begin{proof} Since $V^l(\gl_n)$ embeds in $V^l(\slf_{n+1})$ via \eqref{eq:embedding}, $C^l(n)$ contains a copy of the coset $\Com(V^{l+1}(\gl_n), V^l(\gl_n)\otimes\E(n))$, which is isomorphic to
$$\mathcal{H}_0 \otimes \Com(V^{l+1}(\slf_n), V^l(\slf_n)\otimes\E(n)) \cong \mathcal{H}_0 \otimes \Com(V^{l+1}(\slf_n), V^l(\slf_n)\otimes L_1(\slf_n )).$$ By Theorem \ref{ACLmain}, this coset is isomorphic to $\mathcal{H}_0 \otimes \W^{r}(\slf_n)$. 

It is also clear from the definition of $C^l(n)$ that it contains a copy of $\mathcal{G}^l(n)$; what remains to show is that this copy of $\mathcal{G}^l(n)$ commutes with $\mathcal{H}_0 \otimes \W^{r}(\slf_n)$ and that \eqref{eq:confemb} is a conformal embedding. But this is clear from Theorems 5.1 and 5.2 of \cite{FZ}, since the Virasoro elements for the cosets $\Com(\mathcal{H}_0, C^l(n))$, $\Com(\W^r(\slf_n), C^l(n))$, and $\Com(\mathcal{G}^l(n), C^l(n))$ pairwise commute, and their sum is the total Virasoro element for $C^l(n)$.
\end{proof}

\begin{rem} By Theorem 8.1 of \cite{L}, $\mathcal{G}^l(n)$ is of type $\W(2,3,\dots, n^2+3n+1)$ for generic values of $l$. In the case $n = 1$, $\mathcal{G}^l(1)$ coincides with the parafermion algebra of $\slf_2$, which is of type $\W(2,3,4,5)$ \cite{DLY}.
\end{rem}

Let $C_l(n)$ denote the unique simple graded quotient of $C^l(n)$. Suppose first that $l>1$ is a positive integer. Then the maps $$V^{l}(\gl_n) \hookrightarrow V^l(\slf_{n+1}),\qquad V^{l+1}(\gl_n) \hookrightarrow V^l(\slf_{n+1}) \otimes \E(n)$$ induce maps of simple vertex algebras
\begin{equation} \label{eq:simplequotientmap} L_{l}(\slf_n) \otimes \mathcal{H}_0\hookrightarrow L_l(\slf_{n+1}),\qquad  L_{l+1}(\slf_n) \otimes \mathcal{H}_0\hookrightarrow L_l(\slf_{n+1}) \otimes \E(n).\end{equation}
In fact, these maps extend to embeddings

\begin{equation} \begin{split} & L_{l}(\slf_n) \otimes V_L \hookrightarrow L_l(\slf_{n+1}),
\\ &  L_{l+1}(\slf_n) \otimes V_L \hookrightarrow L_l(\slf_{n+1}) \otimes \E(n).\end{split} \end{equation}  Here $V_L$ is the lattice vertex algebra associated to the rank one lattice $$L = \sqrt{n(n+1)(n+l+1)}\ \mathbb{Z},$$ which is an extension of $\mathcal{H}_0$. It follows from Theorem 8.1 of \cite{CL} that the simple quotient $\mathcal{G}_l(n)$ of $\mathcal{G}^l(n)$ coincides with the coset
$$\Com(L_{l}(\slf_n) \otimes V_L , L_l(\slf_{n+1})).$$ Similarly,
$$C_l(n) = \Com(L_{l+1}(\slf_n) \otimes V_L , L_l(\slf_{n+1}) \otimes \E(n)).$$
In particular, $C_l(n)$ is a vertex superalgebra extension of
$$V_L  \otimes \W_r(\slf_n) \otimes \mathcal{G}_l(n).$$ It was shown in \cite{ACL} that 
$$\mathcal{G}_l(n) \cong \W_s(\slf_l),\qquad r= -n + \frac{l + n}{1 + l + n},\qquad s = -l + \frac{l+n}{1+l+n}.$$ which is lisse and rational by \cite{Ar2,Ar3}. It follows that $C_l(n)$ is a rational vertex superalgebra; see Corollary 14.1 of \cite{ACL}. 

\begin{rem} The case $l=1$ is degenerate because $L_{1}(\slf_n) \otimes \mathcal{H}_0$ is conformally embedded in $L_1(\slf_{n+1})$ \cite{AKMPP}, so that $\mathcal{G}_1(n) \cong \mathbb{C}$. In this case, $C_1(n)$ is an extension of $V_L  \otimes \W_r(\slf_n)$, where $L = \sqrt{n(n+1)(n+2)}\ \mathbb{Z}$ and $\displaystyle r =  -n + \frac{1 + n}{2 + n}$, so it is also rational and lisse.  
\end{rem}

For $l >1$, the key ingredient in proving the rationality of $C_l(n)$ is that $\mathcal{G}_l(n)$ is coincident with a type $A$ principal $\W$-algebra. In fact, the levels $l$ where $\mathcal{G}_l(n)$ is isomorphic to $\W_{s}(\slf_m)$ for some $m \geq 3$ and some level $s \in \mathbb{C}$, have been classified; see Theorem 10.12 of \cite{L}. In addition to the above family when $l = m$, we have the following two additional families:

\begin{enumerate}
\item For $n> 1$, $m\geq 3$, $m \neq n+1$, and $\displaystyle l =  -(n+1) + \frac{m - (n+1)}{m-1}$, we have  
$$\mathcal{G}_l (n) \cong \W_{s}(\slf_m),\qquad s = -m + \frac{m - n -1}{m -1},\ -m + \frac{m-1}{m - n -1},$$ which has central charge $\displaystyle c = \frac{(1 + n - m + n m) (m + n m-1)}{1 + n - m}$. Note that the level $l$ of $V^l(\slf_{n+1})$ is admissible if $m \geq 2(n+1)$, and $m-1$ and $m - (n+1)$ are relatively prime.

\item For $n>1$, $m\geq 3$, $m\neq n$, and $\displaystyle l = -(n+1) + \frac{1 + n}{1 + m}$, we have 
$$\mathcal{G}_l(n) \cong \W_{s} (\slf_m), \qquad s = -m + \frac{m -n}{1 + m},\  -m + \frac{1 + m}{m -n},$$ which has central charge $\displaystyle c = \frac{n(m-1) (1 + 2 m + n m)}{n - m}$. Note that if $m+1$ and $n+1$ are relatively prime the level $l$ is boundary admissible for $\slf_{n+1}$
\end{enumerate}

\begin{ques} For the above levels $l$, do the maps $$V^{l}(\gl_n) \hookrightarrow V^l(\slf_{n+1}),\qquad V^{l+1}(\gl_n) \hookrightarrow V^l(\slf_{n+1}) \otimes \E(n)$$ induce maps of simple vertex algebras
\begin{equation} \label{eq:simplequotientmap} L_{l}(\slf_n) \otimes \mathcal{H}_0\hookrightarrow L_l(\slf_{n+1}), \qquad L_{l+1}(\slf_n) \otimes \mathcal{H}_0\hookrightarrow L_l(\slf_{n+1}) \otimes \E(n)?\end{equation} 
\end{ques}

\begin{rem} If the maps \eqref{eq:simplequotientmap} exist, Theorem 8.1 of \cite{CL} would imply that
$$\mathcal{G}_l(n) = \Com(L_{l}(\slf_n) \otimes \mathcal{H}_0, L_l(\slf_{n+1})),\qquad C_l(n) = \Com(L_{l+1}(\slf_n) \otimes \mathcal{H}_0, L_l(\slf_{n+1}) \otimes \E(n)),$$ and we would have a conformal embedding
$$ \mathcal{H}_0 \otimes \W_{r} (\slf_n) \otimes \W_{s}(\slf_m) \hookrightarrow C_l(n),\qquad r = -n + \frac{l+n}{1+l+n}.$$ 
\end{rem}

\section{Ito's conjecture for $n=2$}
In this section, we shall prove our main result, which is that Ito's conjecture holds for $n=2$. For this, we need the explicit free field realization of $\W^k(\slf(3|2))$. Recall that $\W^k(\slf(3|2))$ is isomorphic to the intersection of the kernels of the screening operators $\scr_i$ acting on $\Hc\otimes\Fne$ for $i=1,\ldots,4$ if $k$ is generic, see Section \ref{Introduction sec}. We introduce two even fields $H,S$ and two odd fields $G_\pm$ on $\Hc\otimes\Fne$ defined as follows:
\begin{align*}
&H=2\alpha_1-\alpha_2+\alpha_3-2\alpha_4-:\Phi_1\Phi_2:-:\Phi_1\Phi_4:-:\Phi_3\Phi_4:,\\
&G_+=:\alpha_1\Phi_2:+:\alpha_1\Phi_4:+:\alpha_3\Phi_4:+(k+1)\der\Phi_2+2(k+1)\der\Phi_4,\\
&G_-=:\alpha_2\Phi_1:+:\alpha_4\Phi_1:+:\alpha_4\Phi_3:+2(k+1)\der\Phi_1+(k+1)\der\Phi_3,\\
&S=\frac{3}{2}\Bigl(:\alpha_1^2:+:\alpha_1\alpha_3:-:\alpha_1\alpha_4:-\frac{1}{2}:\alpha_2^2:-2:\alpha_2\alpha_3:+:\alpha_2\alpha_4:-\frac{1}{2}:\alpha_3^2:\\
&+:\alpha_4^2:-:\alpha_1(\Phi_1\Phi_2+\Phi_1\Phi_4+\Phi_3\Phi_4):-:\alpha_2(\Phi_1\Phi_2+\Phi_1\Phi_4-2\Phi_3\Phi_4):\\
&-:\alpha_3(2\Phi_1\Phi_2-\Phi_1\Phi_4-\Phi_3\Phi_4):+:\alpha_4(\Phi_1\Phi_2+\Phi_1\Phi_4+\Phi_3\Phi_4):+2:\Phi_1\Phi_2\Phi_3\Phi_4:\\
&-\frac{1}{2}(4k+3):\Phi_1\der\Phi_2:+\frac{1}{2}(2k+3):\Phi_1\der\Phi_4:+\frac{1}{2}(2k+3):\Phi_3\der\Phi_4:\\
&-\frac{1}{2}(2k+3):(\der\Phi_1)\Phi_2:-\frac{1}{2}(2k+3):(\der\Phi_1)\Phi_4:+\frac{1}{2}(4k+3):(\der\Phi_3)\Phi_4:\\
&+(k+1)\der(\alpha_1-\alpha_2-\alpha_3+\alpha_4)\Bigr).
\end{align*}

\begin{lemma}\label{kernel lemma}
$H, S, G_\pm$ belong to $\bigcap_{i=1}^{4}\Ker\scr_i$.
\end{lemma}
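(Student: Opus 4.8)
The plan is to use the standard free-field description of the screening charges behind the isomorphism \eqref{scr eq} from \cite{G}: each $\scr_i$ is the zero mode $\int q_i(z)\,dz$ of the screening current $q_i(z):=\,:\e^{-\frac{1}{k+1}\int\alpha_i(z)}\Phi_i(z):$, and this zero mode acts as an \emph{odd derivation} of all the products $a_{(n)}b$ (with no quantum corrections, as is forced by the Borcherds identity involving a zero mode), valued in the rank-one lattice-type module $M_i$ generated by the bosonic exponential $p_i(z):=\e^{-\frac{1}{k+1}\int\alpha_i(z)}$. Hence $\scr_i$ is determined by its values on the strong generators together with $\der$-compatibility, which I read off as the residues of the OPEs $q_i(z)\alpha_j(w)$ (contract $\alpha_j$ against the exponential $p_i$) and $q_i(z)\Phi_j(w)$ (contract $\Phi_j$ against the fermion $\Phi_i$ inside $q_i$):
\begin{equation*} \scr_i\alpha_j = (\alpha_i|\alpha_j)\,q_i,\qquad \scr_i\Phi_j = (\alpha_i|\alpha_j)\,p_i,\qquad \scr_i\der = \der\,\scr_i, \end{equation*}
where $(\alpha_i|\alpha_j)$ is the Cartan form recorded in Section \ref{Introduction sec}, which here coincides with the fermionic pairing $\langle\Phi_i,\Phi_j\rangle$.

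Since the kernel of a derivation is a vertex subalgebra, it suffices to treat the four generators individually. For $H$ and $G_\pm$ this is essentially known: comparing with the general formulas for $G_\pm^{(n)}$ and $H^{(n)}=(G_+^{(n)})_{(1)}(G_-^{(n)})$ in Section \ref{Introduction sec}, one sees that $G_\pm$ are scalar multiples of the $N=2$ odd generators $G_\pm^{(2)}$ and that $H$ is a normalization of the associated Heisenberg field, all of which were shown to lie in $\bigcap_{i=1}^{4}\Ker\scr_i$ in \cite[Section 2.2]{I}. Alternatively, as each of $H,G_\pm$ involves only a few monomials, one can apply the derivation formulas above directly as a short warm-up.

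The substance of the lemma is the field $S$. Here I would verify $\scr_i S=0$ for each $i$ by applying $\scr_i$ to the (roughly twenty) normally ordered monomials of $S$ via the graded Leibniz rule, substituting the generator values above, and expanding the result in the monomial basis of $M_i$. The output is a linear combination of two kinds of monomials: those still carrying the fermion $\Phi_i$ (produced by $\scr_i\alpha_j=(\alpha_i|\alpha_j)q_i$ with $q_i=\,:p_i\Phi_i:$) and those without it (produced by $\scr_i\Phi_j=(\alpha_i|\alpha_j)p_i$). By linear independence, $\scr_i S=0$ amounts to the vanishing of every coefficient, and I expect the specific $k$-dependent constants in $S$ (the overall $\tfrac32$, the factors $\tfrac12(4k+3)$ and $\tfrac12(2k+3)$, and the term $(k+1)\der(\alpha_1-\alpha_2-\alpha_3+\alpha_4)$) to be exactly those making all coefficients cancel.

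The main obstacle is bookkeeping rather than conceptual difficulty, and two points need care. First, reassociating iterated normal-ordered products, for instance rewriting $(\scr_i\alpha_j)_{(-1)}\Phi_m=(\alpha_i|\alpha_j):q_i\Phi_m:$ in the monomial basis, produces lower-order cross terms proportional to the fermionic contraction $(\alpha_i|\alpha_m)p_i$; these must be collected together with the genuine $\scr_i\Phi_m$ contributions before the cancellation is visible. Second, the fermionic signs from commuting the odd $\scr_i$ past the odd $\Phi_j$ must be tracked consistently. To halve the work I would exploit the symmetry $\sigma$ of the weighted Dynkin diagram given by $\alpha_i\mapsto\alpha_{5-i}$ and $\Phi_i\mapsto\Phi_{5-i}$: it preserves both bilinear forms, so it is a vertex algebra automorphism sending $\scr_i\mapsto\scr_{5-i}$ and $G_+\leftrightarrow G_-$, and it fixes $S$ (an identity one checks along the way). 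Granting this, the vanishing of $\scr_3 S$ and $\scr_4 S$ follows from that of $\scr_2 S$ and $\scr_1 S$, reducing the verification to the two screenings $\scr_1$ and $\scr_2$.
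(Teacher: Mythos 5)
Your proposal is correct and takes essentially the same approach as the paper: the paper's entire proof is ``Direct calculations,'' and your plan---computing each screening charge as an odd, module-valued derivation determined by its values $\scr_i\alpha_j=(\alpha_i|\alpha_j)q_i$, $\scr_i\Phi_j=(\alpha_i|\alpha_j)p_i$ on generators, then checking the four elements---is exactly how such a calculation is organized. The labor-saving devices you add (identifying $G_\pm=\sqrt{k+1}\,G^{(2)}_\pm$ and $H=H^{(2)}$ so that Ito's result covers them, and using the diagram involution $\alpha_i\mapsto\alpha_{5-i}$, $\Phi_i\mapsto\Phi_{5-i}$ to reduce the verification for $S$ to $\scr_1$ and $\scr_2$) are sound refinements of the same direct verification.
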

\begin{proof}
Direct calculations.
\end{proof}

Set
\begin{align*}
&L=\frac{1}{k+1}{G_+}_{(0)}G_{-}-\frac{1}{2}\der H,\quad
W_2=\frac{1}{2}L+S,\quad
Q_{\pm}={G_\pm}_{(0)}S+\frac{1}{4}\der G_\pm,\\
&W_3={G_+}_{(0)}Q_{-}-\frac{1}{4}(k+1)\left(2\der S+\der L+6:H L:-2:H^3:\right),
\end{align*}
where $k\neq-1$, and we denote by $A(z)=\sum_{n\in\Z}A_{(n)}z^{-n-1}$ the field corresponding to $A$ for any $A\in\Hc\otimes\Fne$. Then $L$ is a Virasoro element of $\Hc\otimes\Fne$ with central charge $-6(3k+2)$. Even primary elements $H, W_2, W_3$ have conformal degree $1,2,3$, and odd primary elements $G_+, G_-, Q_+, Q_-$ have conformal degree $\frac{3}{2}, \frac{3}{2}, \frac{5}{2}, \frac{5}{2}$ respectively.

\begin{prop}\label{gen prop}
For generic $k$, the elements $H, L, W_2, W_3, G_\pm, Q_\pm$ strongly generate the $\W$-algebra $\W^k(\slf(3|2))$ in $\Hc\otimes\Fne$.
\end{prop}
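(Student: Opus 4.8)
The plan is to show that the eight elements $H, G_\pm$ (of conformal weights $1, \frac32, \frac32$), $L, W_2$ (weight $2$), $Q_\pm$ (weight $\frac52$) and $W_3$ (weight $3$) form a minimal strong generating set. The structural input is the general theory of \cite{KW3,KW4}: for generic $k$, $\W^k(\slf(3|2))$ is freely generated of type $\W(1,\frac32,\frac32,2,2,\frac52,\frac52,3)$, the minimal strong generators being indexed by a homogeneous basis of $\g^f$, with a generator attached to $u\in\g_j$ of conformal weight $1-j$ and the parity of $u$. In particular the number of minimal generators in each weight is fixed, namely $1$ in weight $1$, two odd in weight $\frac32$, two in weight $2$, two odd in weight $\frac52$, one in weight $3$, and none elsewhere; and freeness determines the dimension of every graded piece.

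By Lemma \ref{kernel lemma}, $H, S, G_\pm$ lie in $\bigcap_{i=1}^4\Ker\scr_i=\W^k(\slf(3|2))$; since this intersection is a vertex subalgebra of $\Hc\otimes\Fne$, the derived elements $L$, $W_2=\frac12 L+S$, $Q_\pm$ and $W_3$ also lie in $\W^k(\slf(3|2))$, with the weights recorded above. It remains to prove that they generate. For this I would induct on conformal weight, using the standard principle that a family of homogeneous elements with the same weight multiplicities as a minimal strong generating set is itself a strong generating set, provided that in each weight $d$ the new elements are linearly independent modulo the span of the normally ordered products and derivatives of strictly lower weight. Since every weight-$d$ normally ordered product of two or more generators, and every derivative of a weight-$(d-1)$ element, is assembled from pieces of weight $\le d-1$, the inductive hypothesis shows that the decomposable subspace in weight $d$ is exactly the span of such expressions in our eight elements; hence comparing the new elements against that subspace is all that is required.

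Concretely I would carry out the following finite checks inside the explicit free field realization $\Hc\otimes\Fne$, where every element is a differential polynomial in the $\alpha_i$ and $\Phi_i$ over $\C(k)$: (i) $H\neq 0$ in weight $1$; (ii) $G_+$ and $G_-$ are linearly independent in weight $\frac32$, which is immediate since $G_+$ involves only $\Phi_2,\Phi_4$ and $G_-$ only $\Phi_1,\Phi_3$; (iii) $L$ and $W_2$ are independent modulo $\langle \der H,\ :HH:\rangle$ in weight $2$; (iv) $Q_+$ and $Q_-$ are independent modulo $\langle :HG_\pm:,\ \der G_\pm\rangle$ in weight $\frac52$; and (v) $W_3$ is independent modulo the full space of weight-$3$ decomposables built from $H, G_\pm, L, W_2$ (spanned by such elements as $:HHH:$, $:H\der H:$, $\der^2 H$, $:HL:$, $:HW_2:$, $\der L$, $\der W_2$, and $:G_+G_-:$). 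Once these hold, the induction yields that the eight elements span all of $\W^k(\slf(3|2))$ in each weight $d\le 3$; for $d>3$ there are no new minimal generators, so every higher weight space is automatically a span of normally ordered products of our elements, completing the proof.

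The main obstacle is the weight-$3$ verification (v): the space of weight-$3$ decomposables is comparatively large, and one must confirm that $W_3$ is a genuinely new generator there rather than a differential polynomial in $H, G_\pm, L, W_2$. Conceptually, however, each step reduces to linear algebra in the explicit free field model, and the essential structural ingredient is the freely generated type furnished by \cite{KW3,KW4}, which is precisely what makes a finite weight-by-weight comparison up to weight $3$ sufficient.
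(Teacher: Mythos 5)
Your proposal is correct and follows essentially the same route as the paper: both arguments combine the Kac--Wakimoto result that $\W^k(\slf(3|2))$ has a minimal strong generating set of type $\W(1,\tfrac{3}{2},\tfrac{3}{2},2,2,\tfrac{5}{2},\tfrac{5}{2},3)$ (from the graded dimensions of $\slf(3|2)^{f_\prin}$) with Lemma \ref{kernel lemma}, and then verify weight-by-weight, up to conformal weight $3$, that the eight explicit elements span each weight space modulo the decomposables (normally ordered products and derivatives of lower-weight elements), which is exactly the paper's identification of $\V_1,\V_{\frac{3}{2}},\V_2,\V_{\frac{5}{2}},\V_3$ in terms of $H,G_\pm,L,W_2,Q_\pm,W_3$ and the spaces $\mathcal{U}_j$.
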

\begin{proof}
In general, by \cite[Theorem 4.1]{KW3}, if $\{ u_i\}_{i=1}^{\dim\g^f}$ is a basis of $\g^f$ with $u_i\in\g_{j_i}$, where $\g^f$ is the centralizer of $f$ in $\g$, the $\W$-algebra $\W^k(\g,f;\Gamma)$ has a minimal set $\{ \widetilde{J}_{u_i}\}_{i=1}^{\dim\g^f}$ of strong generators, and then $\widetilde{J}_{u_i}$ has the conformal degree $1-j_i$ and the same parity as $u_i$.

For generic $k$, let $\V=\bigcap_{i=1}^{4}\Ker\scr_i$ be a vertex subalgebra of $\Hc\otimes\Fne$. By \eqref{scr eq}, $\V$ is isomorphic to $\W^k(\slf(3|2))$. By Lemma \ref{kernel lemma}, all elements $H$, $L$, $W_2$, $W_3$, $G_\pm$, $Q_\pm,$ belong to $\V$. Since $\dim\slf(3|2)^{f_\prin}=8$, and $\dim\slf(3|2)^{f_\prin}\cap\slf(3|2)_j$ is equal to $1$ for $j=0,-2$ and is equal to $2$ for $j=-\frac{1}{2},-1,-\frac{3}{2}$, $\V$ has a minimal strong generating set consisting of four even fields in conformal degrees $1,2,2,3$ and four odd fields in conformal degrees $\frac{3}{2}$, $\frac{3}{2}$, $\frac{5}{2}$, $\frac{5}{2}$ respectively. Let $\V_j$ be the subspace of $\V$ with conformal degree $j$ and $\mathcal{U}_j=\V_{<j}\cap\operatorname{Span}\{A_{1 (-n_1)}\cdots A_{s-1 (-n_{s-1})}A_{s}\in\V\mid A_{i}\in \V_{<j}, n_i\geq1, s\in\Z_{\geq1}\}$, where $\V_{<j}=\bigoplus_{p<j}\V_p$. Then $\dim\V_{<0}=\dim\V_{\frac{1}{2}}=0$, $\dim\V_0=1$, $\V_1=\C H$, $\V_{\frac{3}{2}}=\C G_+\oplus\C G_-$, $\V_2=\C L\oplus\C W_2$, $\V_{\frac{5}{2}}=\C Q_+\oplus\C Q_-\oplus\mathcal{U}_{\frac{5}{2}}$ and $\V_3=\C W_3\oplus\mathcal{U}_3$. Therefore $H, L, W_2, W_3, G_\pm, Q_\pm$ strongly generate $\V\simeq\W^k(\slf(3|2))$.
\end{proof}

\begin{cor}\label{gen cor}
If $k$ is generic, the vertex subalgebra of $\Hc\otimes\Fne$ (weakly) generated by $H,S,G_\pm$ is isomorphic to $\W^k(\slf(3|2))$.
\end{cor}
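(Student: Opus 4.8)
The plan is to show that the vertex subalgebra $\mathcal{V}'$ weakly generated by $H, S, G_\pm$ inside $\Hc\otimes\Fne$ coincides with $\W^k(\slf(3|2))$, by proving the two inclusions separately.

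First I would establish $\mathcal{V}' \subseteq \W^k(\slf(3|2))$. By Lemma \ref{kernel lemma} the four elements $H, S, G_\pm$ all lie in $\bigcap_{i=1}^{4}\Ker\scr_i$, which for generic $k$ is identified with $\W^k(\slf(3|2))$ by \eqref{scr eq}. Since this intersection is a vertex subalgebra, it is closed under every $n$-th product, and the smallest vertex subalgebra containing $H, S, G_\pm$ is therefore contained in it.

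The substance is the reverse inclusion, and here the point is that the strong generators produced in Proposition \ref{gen prop} were already defined as iterated vertex-algebra operations applied to $H, S, G_\pm$. Since $\mathcal{V}'$ is closed under all products and under $\der$ (using $\der A = A_{(-2)}\mathbf{1}$), I would read off from the defining formulas, in order: $L = \frac{1}{k+1}{G_+}_{(0)}G_- - \frac{1}{2}\der H \in \mathcal{V}'$ (legitimate since $k\neq -1$ for generic $k$); then $W_2 = \frac{1}{2}L + S \in \mathcal{V}'$; then $Q_\pm = {G_\pm}_{(0)}S + \frac{1}{4}\der G_\pm \in \mathcal{V}'$; and finally $W_3 = {G_+}_{(0)}Q_- - \frac{1}{4}(k+1)\left(2\der S + \der L + 6:HL: - 2:H^3:\right)\in\mathcal{V}'$, where $:HL:$ and $:H^3:$ are iterated $(-1)$-products. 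Thus $\mathcal{V}'$ contains the whole set $\{H, L, W_2, W_3, G_\pm, Q_\pm\}$, which strongly generates $\W^k(\slf(3|2))$; being a vertex subalgebra, $\mathcal{V}'$ then contains every iterated $(-n)$-product of these, hence all of $\W^k(\slf(3|2))$.

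Combining the two inclusions yields $\mathcal{V}' = \W^k(\slf(3|2))$, and the isomorphism follows from \eqref{scr eq}. I do not expect a genuine obstacle: the entire argument is bookkeeping, since $L, W_2, Q_\pm, W_3$ were engineered to arise from $H, S, G_\pm$ through singular OPE products and derivatives. The one conceptual point worth emphasizing is that weak generation—unlike strong generation—gives access to the $(0)$-products ${G_+}_{(0)}G_-$, ${G_\pm}_{(0)}S$, and ${G_+}_{(0)}Q_-$, and it is precisely these that recover $L$, $Q_\pm$, and $W_3$ from the four weak generators.
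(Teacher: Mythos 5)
Your proof is correct and follows essentially the same route as the paper: the paper's proof likewise observes that $L, W_2, W_3, Q_\pm$ lie in the subalgebra generated by $H, S, G_\pm$ by their very definitions, and then invokes Proposition \ref{gen prop}. You merely spell out the details (closure under $n$-th products and $\der$, the inclusion $\mathcal{V}' \subseteq \bigcap_{i=1}^{4}\Ker\scr_i$ via Lemma \ref{kernel lemma}) that the paper compresses into ``by definition.''
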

\begin{proof}
By definition, all elements $L, W_2, W_3, Q_\pm$ belong to the vertex subalgebra generated by $H,S,G_\pm$. Therefore the assertion follows from Proposition \ref{gen prop}.
\end{proof}

Next, we consider the coset $C^l(2)$.
Suppose that $l\neq-3$. Let $\widehat{H}, \widehat{S}$ be even elements and $\widehat{G}_\pm$ odd elements in $V^l(\slf_{3})\otimes\E(2)$ defined as follows:
\begin{align*}
&\widehat{H}=\frac{1}{l+3}\left(h_1-2h_2 +l:b_1 c_1:+l:b_2 c_2:\right),\\
&\widehat{G}_+=\frac{1}{l+3}\left(:e_{3,1} b_1:+e_{3,2} b_2:\right),\quad
\widehat{G}_-=\frac{1}{l+3}\left(:e_{1,3} c_1:+e_{2,3} c_2:\right),\\
&\widehat{S}=-\frac{3}{2(l+3)^2}\Bigl(\frac{1}{2}:h_1^2:+:h_1 h_2:-:h_2^2:+3:e_{1,2}e_{2,1}:-:e_{2,3}e_{3,2}:-:e_{1,3}e_{3,1}:\\
&-(2l+3):h_1 b_1 c_1:+(l+2):h_1 b_2 c_2:+(l+1):h_2 b_1 c_1:+(l+1):h_2 b_2 c_2:\\
&-(3l+5):e_{1,2}b_2 c_1:-(3l+5):e_{2,1}b_1 c_2:+l(2l+3):b_1 b_2 c_1 c_2:-\frac{1}{2}l(l+2):b_1 \der c_1:\\
&-\frac{1}{2}l(l+2):b_2\der c_2:+\frac{1}{2}l(l+2):(\der b_2)c_2:-2\der h_1+\der h_2\Bigr).
\end{align*}

\begin{lemma}\label{coset gen lem}
$\widehat{H}, \widehat{S},\widehat{G}_\pm$ belong to $C^l(2)$.
\end{lemma}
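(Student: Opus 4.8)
The plan is to use the defining property of the coset directly: an element of $V^l(\slf_3)\otimes\E(2)$ lies in $C^l(2)=\Com(V^{l+1}(\gl_2),V^l(\slf_3)\otimes\E(2))$ if and only if it has regular operator product expansion with every strong generator of $V^{l+1}(\gl_2)$. Under the diagonal embedding coming from \eqref{eq:embedding} and the $bc$-action, $V^{l+1}(\gl_2)$ is strongly generated by the four currents
\begin{align*}
J^E&=e_{1,2}+{:}b_1c_2{:}, & J^F&=e_{2,1}+{:}b_2c_1{:},\\
J^{h_1}&=h_1+{:}b_1c_1{:}-{:}b_2c_2{:}, & J^{\mathrm{tr}}&=\varpi_2+{:}b_1c_1{:}+{:}b_2c_2{:},
\end{align*}
where $\varpi_2=\tfrac13(h_1+2h_2)$. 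Since $\gl_2$ is generated as a Lie algebra by $e_{1,2},e_{2,1}$ together with its center, and $h_1=[e_{1,2},e_{2,1}]$, the Borcherds commutator identity $(J^E_{(0)}J^F)_{(m)}v=J^E_{(0)}(J^F_{(m)}v)\pm J^F_{(m)}(J^E_{(0)}v)$ shows that a field commuting with $J^E$ and $J^F$ automatically commutes with $J^{h_1}$. Hence it suffices to verify that each of $\widehat H,\widehat S,\widehat G_\pm$ has regular OPE with the three currents $J^E$, $J^F$, and $J^{\mathrm{tr}}$.

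First I would dispose of $J^{\mathrm{tr}}$, which merely records the diagonal $U(1)$-charge. A short computation gives that $e_{3,i}$ carries $\varpi_2$-charge $-1$ while $b_i$ carries ghost charge $+1$, so each monomial of $\widehat G_+$, and dually of $\widehat G_-$, $\widehat H$, and $\widehat S$, is charge-neutral. As neither $\varpi_2$ nor the ghost current pairs with the off-diagonal root vectors occurring in these fields, the OPE with $J^{\mathrm{tr}}$ has no double pole, and the simple pole equals the total charge times the field, which is zero. Thus the $J^{\mathrm{tr}}$-OPE is regular.

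The substance is the OPE with $J^E$ and $J^F$, which I would compute with the non-commutative Wick formula. The essential mechanism is the diagonal cancellation built into the definitions: in $J^F(z)\widehat G_+(w)$, for instance, the bracket $[e_{2,1},e_{3,2}]=-e_{3,1}$ produces a simple pole proportional to ${:}e_{3,1}b_2{:}$ from the $\slf_3$ side, which is cancelled exactly by the contraction of $c_1$ in ${:}b_2c_1{:}$ against $b_1$ in ${:}e_{3,1}b_1{:}$ from the ghost side, and the symmetric computation handles $J^E(z)\widehat G_+(w)$. The same cancellation annihilates the $J^E$- and $J^F$-OPEs of $\widehat H$ after a brief calculation. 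To halve the bookkeeping I would invoke the evident symmetry interchanging the raising and lowering data, $e_{i,j}\leftrightarrow e_{j,i}$ and $b_i\leftrightarrow c_i$, under which $J^E\leftrightarrow J^F$ and $\widehat G_+\leftrightarrow\widehat G_-$, so that each check for $\widehat G_+$ against $J^F$ yields the corresponding check for $\widehat G_-$ against $J^E$, and similarly for $\widehat H$ and $\widehat S$.

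The main obstacle is $\widehat S$, a weight-two field with many quadratic and cubic normally ordered terms mixing $\slf_3$-currents with ghosts. Showing that $J^E(z)\widehat S(w)$ and $J^F(z)\widehat S(w)$ are regular requires collecting all single and double contractions of $e_{1,2}$ (respectively $e_{2,1}$) with each $\slf_3$-factor and of ${:}b_1c_2{:}$ (respectively ${:}b_2c_1{:}$) with each ghost-factor, and then verifying that the $l$-dependent coefficients conspire so that every singular term cancels; this is precisely the requirement that pins down the coefficients chosen in $\widehat S$. The calculation is mechanical but lengthy, and I would organize it by grading the output by conformal weight together with ghost and root charge, so that only a handful of target monomials can occur in each pole and the cancellations can be verified coefficient by coefficient.
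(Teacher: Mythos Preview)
Your approach is essentially the same as the paper's, whose proof is the single line ``Direct calculations.'' You have simply unpacked what that calculation entails: checking regularity of the OPEs against a strong generating set of the diagonal $V^{l+1}(\gl_2)$, with the useful reductions (i) $J^{h_1}=(J^E)_{(0)}J^F$ so it need not be checked separately, and (ii) the $b\leftrightarrow c$, $e_{i,j}\leftrightarrow e_{j,i}$ symmetry halving the work.

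One small correction: your argument that the $J^{\mathrm{tr}}$ double pole vanishes because ``neither $\varpi_2$ nor the ghost current pairs with the off-diagonal root vectors'' is incomplete. Both $\widehat H$ and $\widehat S$ contain Cartan pieces (e.g.\ $h_1-2h_2$ and ${:}h_i h_j{:}$) and ghost-number currents that \emph{do} pair nontrivially with $\varpi_2$ and with ${:}b_1c_1{:}+{:}b_2c_2{:}$; the double pole vanishes only because these two contributions cancel (for $\widehat H$ one finds $-2l/(l+3)$ from the $\slf_3$ side and $+2l/(l+3)$ from the ghost side). This is exactly the kind of $l$-dependent cancellation you anticipate for $\widehat S$, so it does not change your strategy, but it should be computed rather than dismissed.
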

\begin{proof}
Direct calculations.
\end{proof}

For $l\neq-3$, set $k=\frac{1}{l+3}-1$. We define elements $\widehat{L},\widehat{W}_2,\widehat{W}_3,\widehat{Q}_\pm$ in $V^l(\slf_{3})\otimes\E(2)$ in the same way as $L,W_2,W_3,Q_\pm$, with $A$ replaced by $\widehat{A}$ for $A=H,S,G_\pm$. By Lemma \ref{coset gen lem}, all elements $\widehat{L},\widehat{W}_2,\widehat{W}_3,\widehat{Q}_\pm$ belong to $C^l(2)$.

\begin{lemma}
For generic $k$, there exists a vertex superalgebra homomorphism $\gamma\colon\W^k(\slf(3|2))\rightarrow C^l(2)$, where $(k+1)(l+3)=1$.
\end{lemma}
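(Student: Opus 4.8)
The plan is to construct $\gamma$ by matching generators, using the abstract presentation of $\W^k(\slf(3|2))$ supplied by Corollary \ref{gen cor}. Since for generic $k$ the $\W$-superalgebra is weakly generated by the four elements $H, S, G_+, G_-$ inside $\Hc\otimes\Fne$, it suffices to send these to the elements $\widehat H, \widehat S, \widehat G_+, \widehat G_-$ of $C^l(2)$ exhibited in Lemma \ref{coset gen lem}, and to verify that this assignment respects all OPE relations among the four source generators. Because $\W^k(\slf(3|2))$ is a quotient of the free vertex superalgebra on these four fields by the ideal generated by their defining OPE relations, producing a homomorphism amounts precisely to checking that the hatted elements in $C^l(2)$ satisfy exactly the same OPE relations (including the null-vector relations that define the $\W$-algebra) as their unhatted counterparts. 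The substitution $(k+1)(l+3)=1$, equivalently $k=\frac{1}{l+3}-1$, is the dictionary under which the structure constants are meant to agree.

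First I would record the defining singular OPEs of $\W^k(\slf(3|2))$ among $H, G_\pm, S$ — that is, the OPEs $H(z)H(w)$, $H(z)G_\pm(w)$, $G_+(z)G_-(w)$, $G_\pm(z)G_\pm(w)$, $H(z)S(w)$, $G_\pm(z)S(w)$, and $S(z)S(w)$ — as computed from the free-field realization on $\Hc\otimes\Fne$, together with the induced relations for the derived generators $L, W_2, W_3, Q_\pm$ (which are $N=2$ plus additional structure). Next I would compute the corresponding OPEs of $\widehat H, \widehat G_\pm, \widehat S$ inside $V^l(\slf_3)\otimes\E(2)$, using the $\gl_3$ current OPEs and the $bc$-relations $b_i(z)c_j(w)\sim \delta_{i,j}/(z-w)$. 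The claim is that, after the identification $k=\frac{1}{l+3}-1$, every structure constant appearing in the hatted OPEs equals the corresponding one in the unhatted OPEs. Granting this, the universal property of $\W^k(\slf(3|2))$ as the subalgebra of $\Hc\otimes\Fne$ generated by $H,S,G_\pm$ yields the desired homomorphism $\gamma$ determined by $H\mapsto\widehat H$, $S\mapsto\widehat S$, $G_\pm\mapsto\widehat G_\pm$.

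The main obstacle is the OPE verification itself, which is a substantial but in principle routine computation. The cleanest way to organize it is to observe that both sides already contain a copy of the $N=2$ superconformal vertex algebra: in the source, $H, L, G_\pm$ generate $N=2$ (as noted after the definition of $L$), and on the coset side $\widehat H, \widehat L, \widehat G_\pm$ generate an $N=2$ algebra of the same central charge, namely $c=-6(3k+2)=\frac{6l}{l+3}$ under $(k+1)(l+3)=1$. Because the $N=2$ algebra is freely generated, the homomorphism is automatic on the $N=2$ subalgebra once the central charges match; what remains is to check the OPEs involving the extra even field $S$ (equivalently $W_2=\frac12 L+S$) with itself and with the $N=2$ generators. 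I would therefore reduce the verification to matching the $S$–$S$, $H$–$S$, and $G_\pm$–$S$ OPEs, which control the remaining structure constants and the null relations. In practice these are checked field by field in the respective free-field and current realizations; I expect no genuine conceptual difficulty beyond the bookkeeping, so the homomorphism exists precisely because the two free-field presentations are engineered to agree under the stated relation between $k$ and $l$.
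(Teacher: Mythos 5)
There is a genuine gap in your argument: you never justify why matching the singular OPEs of $H,S,G_\pm$ with those of $\widehat H,\widehat S,\widehat G_\pm$ is \emph{sufficient} to produce a well-defined homomorphism. A vertex subalgebra of $\Hc\otimes\Fne$ generated by four given fields has no universal property per se; a map defined on generators extends to the generated subalgebra only if \emph{every} identity among normally ordered polynomials in the generators and their derivatives (not just the pairwise singular OPEs) is preserved. Your own parenthetical remark concedes this --- you say one must also check ``the null-vector relations that define the $\W$-algebra'' --- but your proof never checks any such relations, and if they existed this would be an unbounded verification. Your framing of $\W^k(\slf(3|2))$ as ``a quotient of the free vertex superalgebra on these four fields by the ideal generated by their defining OPE relations'' is unsubstantiated and is not how the algebra is presented; note also that the $\lambda$-brackets of $H,S,G_\pm$ do not close on those four fields alone (they produce $L,W_2,W_3,Q_\pm$), so there is no closed OPE structure on the four-element set to which one could even try to attach a universal object.

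The paper closes exactly this gap by a freeness argument, which is the key idea missing from your proposal: by Proposition \ref{gen prop} together with Theorem 4.1 of \cite{KW3}, for generic $k$ the algebra $\W^k(\slf(3|2))$ is \emph{freely} generated by the eight strong generators $H,L,W_2,W_3,G_\pm,Q_\pm$ (no null vectors), and then Proposition 3.11 of \cite{DSK} --- the universal property of freely generated vertex algebras --- guarantees that the assignment $A\mapsto\widehat A$ extends to a homomorphism once the generators' $\lambda$-brackets are matched; Corollary \ref{gen cor} is what lets one reduce the OPE check to the four fields $H,S,G_\pm$. Your reduction via the $N=2$ subalgebra does not repair this: even granting that the $N=2$ part maps over (which itself uses that the copy of $N=2$ inside $\W^k(\slf(3|2))$ is the universal one for generic $k$), the extension of the map to $S$, $W_2$, $W_3$, $Q_\pm$ still requires the free generation of the full $\W$-algebra. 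The computational content of your proposal (computing both families of OPEs and matching structure constants under $k=\frac{1}{l+3}-1$) agrees with the paper, but without the free-generation input the computation does not yield the lemma.
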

\begin{proof}
Suppose that $k$ is generic. By direct calculations, we have
\begin{align*}
&H(z)H(w)\sim\frac{-2(3k+2)}{(z-w)^2},\quad
H(z)S(w)\sim\frac{-\frac{3}{2}(2k+1)H(w)}{(z-w)^2},\quad
H(z)G_\pm(w)\sim\frac{\pm G_\pm(w)}{z-w},\\
&G_+(z)G_-(w)\sim{-2(k+1)(3k+2)}{(z-w)^3}+\frac{(k+1)H(w)}{(z-w)^2}+\frac{(k+1)(L(w)+\frac{1}{2}\der J(w)}{z-w},\\
&G_\pm(z)S(w)\sim\frac{-\frac{3}{4}G_\pm(w)}{(z-w)^2}+\frac{Q_\pm(w)-\frac{1}{4}\der G_\pm(w)}{z-w},\quad
G_+(z)G_+(w)\sim0\sim G_-(z)G_-(w),\\
&S(z)S(w)\sim\frac{\frac{9}{4}(3k+2)(12k^2+23k+6)}{(z-w)^4}\\
&+\frac{3(5k+2)S(w)-\frac{9}{2}(k+1)(4k+1)L(w)-\frac{9}{4}(3k+1):H(w)^2:}{(z-w)^2}\\
&+\frac{\frac{1}{2}\der\left(3(5k+2)S(w)-\frac{9}{2}(k+1)(4k+1)L(w)-\frac{9}{4}(3k+1):H(w)^2:\right)}{z-w}.
\end{align*}
Suppose that $(k+1)(l+3)=1$. By direct calculations, $\widehat{H}(z),\widehat{S}(z),\widehat{G}_\pm(z)$ also satisfy the same fomulae as above by replacing $A$ by $\widehat{A}$ for $A=H,S,G_\pm,L,Q_\pm$. By Proposition \ref{gen prop} and \cite[Theorem 4.1]{KW3}, the $\W$-algebra $\W^k(\slf(3|2))$ is freely generated by $H, L, W_2, W_3, G_\pm, Q_\pm$. Then, Corollary \ref{gen cor} and \cite[Proposition 3.11]{DSK} yields a well-defined vertex superalgebra homomorphism
\begin{align*}
\gamma\colon\W^k(\slf(3|2)) \rightarrow C^l(2),\quad
A\mapsto\widehat{A},\quad
A=H,S,G_\pm,
\end{align*}
as desired.
\end{proof}

\begin{theorem} \label{thm:n=2main}
The map $\gamma$ is an isomorphism. Therefore, for generic $k$, we have
\begin{align*}
\W^k(\slf(3|2))\simeq C^l(2),
\end{align*}
where $(k+1)(l+3)=1$.
\end{theorem}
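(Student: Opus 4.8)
The plan is to show that the homomorphism $\gamma\colon\W^k(\slf(3|2))\to C^l(2)$ is an isomorphism by verifying injectivity and surjectivity separately, and the central observation is that for generic $k$ both sides have exactly the same graded dimensions, so that a map which is either injective or surjective is automatically an isomorphism. First I would record that $\W^k(\slf(3|2))$ is simple for generic $k$ (being the universal $\W$-superalgebra at a generic level), so any nonzero vertex superalgebra homomorphism out of it is automatically injective. Since $\gamma$ sends the generating field $H$ to $\widehat H\neq 0$, the map $\gamma$ is nonzero, hence injective.

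The remaining and harder task is surjectivity. The idea is to compare strong generators and their conformal degrees on the two sides. By Proposition~\ref{gen prop} and the discussion following Lemma~\ref{ks:alternative}, both $\W^k(\slf(3|2))$ and $C^l(2)$ have a minimal strong generating set consisting of even fields in conformal degrees $1,2,2,3$ and odd fields in conformal degrees $\tfrac{3}{2},\tfrac{3}{2},\tfrac{5}{2},\tfrac{5}{2}$; in particular they have the same character. I would therefore argue that the image $\gamma(\W^k(\slf(3|2)))$ is a vertex subalgebra of $C^l(2)$ containing the elements $\widehat H,\widehat L,\widehat W_2,\widehat W_3,\widehat G_\pm,\widehat Q_\pm$, which are the images of a strong generating set for $\W^k(\slf(3|2))$. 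The key point to establish is that these eight elements already strongly generate all of $C^l(2)$, i.e. that they are not merely a strongly generating set for the image but coincide with (a basis of) the minimal strong generators of $C^l(2)$ in each relevant conformal degree.

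The cleanest route to surjectivity is the graded-dimension comparison. Since $\gamma$ is injective and degree-preserving, in each conformal degree $d$ the image $\gamma(\W^k(\slf(3|2)))_d$ is a subspace of $C^l(2)_d$ of dimension $\dim \W^k(\slf(3|2))_d$. Because the two vertex superalgebras have identical minimal strong generating types for generic parameters, their graded characters agree, so $\dim \W^k(\slf(3|2))_d=\dim C^l(2)_d$ for every $d$. An injective degree-preserving linear map between finite-dimensional spaces of equal dimension in each degree is surjective in each degree, whence $\gamma$ is onto and therefore an isomorphism. Because $(k+1)(l+3)=1$ is exactly the relation under which the OPE coefficients were shown to match in the preceding lemma, this establishes $\W^k(\slf(3|2))\simeq C^l(2)$ for generic $k$.

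The main obstacle is making the character equality fully rigorous rather than heuristic. The statement that $C^l(2)$ is freely (and minimally) strongly generated in the asserted degrees comes from Example~7.11 of \cite{CL}, but one must be careful that the eight specific elements $\widehat H,\widehat L,\widehat W_2,\widehat W_3,\widehat G_\pm,\widehat Q_\pm$ really realize this minimal generating set and are linearly independent modulo normally-ordered products of lower-degree generators in each degree up to $3$. I would verify this by an explicit low-degree check: confirm that $\widehat H$ spans degree $1$, that $\widehat L,\widehat W_2$ are independent in degree $2$, that $\widehat G_\pm$ span the odd part of degree $\tfrac32$, and that $\widehat W_3$ together with the decomposable contributions exhausts degree $3$, mirroring the argument of Proposition~\ref{gen prop} on the coset side. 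Once this structural matching is in place, the equality of characters---and hence surjectivity---follows formally, and the delicate OPE computations have already been absorbed into the construction of $\gamma$.
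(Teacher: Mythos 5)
Your injectivity step matches the paper's (simplicity of $\W^k(\slf(3|2))$ at generic $k$), but your surjectivity argument has a genuine gap at its central claim: ``identical minimal strong generating types \ldots\ so their graded characters agree.'' This inference is false in general. A minimal strong generating set does not determine the character unless the algebra is \emph{freely} generated on that set; otherwise there can be normally ordered relations among the generators and their derivatives in higher degrees. A counterexample appears in this very paper: the parafermion algebra $\mathcal{G}^l(1)$ is of type $\W(2,3,4,5)$, yet its character is strictly smaller than that of the freely generated vertex algebra on fields in those degrees. So knowing that $C^l(2)$ has a minimal strong generating set in degrees $1,\frac{3}{2},\frac{3}{2},2,2,\frac{5}{2},\frac{5}{2},3$ does not by itself give $\dim\W^k(\slf(3|2))_d=\dim C^l(2)_d$ for all $d$. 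The argument can be repaired, but only by an ingredient you never invoke: $\W^k(\slf(3|2))$ is \emph{freely} generated by $H,L,W_2,W_3,G_\pm,Q_\pm$ (by \cite{KW3}, \cite{DSK}, as used in the paper's construction of $\gamma$), so its character equals the free character; strong generation of $C^l(2)$ in those degrees bounds its character \emph{above} by that same free character; and injectivity of $\gamma$ bounds it \emph{below} by the character of $\W^k(\slf(3|2))$. This sandwich forces equality in every degree and hence surjectivity. Without the free-generation input, your character equality is unsupported.

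Second, and more seriously, the input you take off the shelf --- that $C^l(2)$ is strongly generated in those degrees for generic $l$ --- is precisely the hard part of the theorem, and it is not available where you cite it: the discussion following Lemma~\ref{ks:alternative} is explicitly restricted to $n\geq 3$, and Proposition~\ref{gen prop} concerns only the $\W$-algebra side. The paper's proof exists essentially to establish this fact for $n=2$, and it does so by a deformation argument: pass to the limit $l\to\infty$, where $C^l(2)$ degenerates to the orbifold $(\mathcal{H}(4)\otimes\E(2))^{GL_2}$ (Example 7.11 of \cite{CL}); use the known strong generators $j^k,w^k,\nu^k,\mu^k$ of that orbifold (Lemma 7.12 of \cite{CL}); exhibit explicit fields $J^k,\Omega^k,N^k,M^k\in C^l(2)$, which are normally ordered polynomials in $\widehat{H},\widehat{L},\widehat{W}_2,\widehat{W}_3,\widehat{G}_\pm,\widehat{Q}_\pm$ with an invertible change of variables, whose limits are exactly those orbifold generators; and then invoke Corollary 6.12 of \cite{CL} to conclude strong generation of $C^l(2)$ by these fields at generic $l$. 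Your proposal assumes the conclusion of this argument rather than proving it, and your fallback ``low-degree check'' (linear independence of the eight images modulo decomposables) is only meaningful once that strong generation statement is already in hand, so it cannot substitute for it.
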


\begin{proof} Since $\W^k(\slf(3|2))$ is generically simple, the map $\gamma$ is injective for generic $k$, so it suffices to show that the coset $C^l(2)$ is strongly generated by the corresponding fields $\widehat{H},\widehat{L}, \widehat{W}_2, \widehat{W}_3,   \widehat{G}_{\pm}, \widehat{Q}_{\pm}$ for generic values of $l$. 

If we rescale the generators of $V^l(\slf_{3})$ by $\frac{1}{\sqrt{l}}$, as explained in Example 3.1 of \cite{CL}, we have 
$$\lim_{l \rightarrow \infty} V^l(\slf_{3}) \cong \mathcal{H}(8),$$ where $\mathcal{H}(8)$ denotes the rank $8$ Heisenberg algebra. In particular, in the limit $l \rightarrow \infty$, the rescaled generators 
$$\tilde{e}_{1,3} = \frac{1}{\sqrt{l}} \ e_{1,3},\qquad \tilde{e}_{2,3} = \frac{1}{\sqrt{l}} \ e_{2,3}, \qquad \tilde{e}_{3,1} = \frac{1}{\sqrt{l}}\ e_{3,1}, \qquad \tilde{e}_{3,2} = \frac{1}{\sqrt{l}} \ e_{3,2}$$  satisfy the following nontrivial OPEs
$$ \tilde{e}_{1,3}(z) \tilde{e}_{3,1}(w) \sim (z-w)^{-2},\qquad  \tilde{e}_{2,3} (z) \tilde{e}_{3,2}(w) \sim (z-w)^{-2}.$$
As shown in Example 7.11 of \cite{CL} in the case $n=2$, $$\lim_{l \rightarrow \infty} C^l(2) \cong (\mathcal{H}(4) \otimes \E(2))^{GL_2}.$$ The generators of $\mathcal{H}(4)$ are denoted by $a^1, a^2, \bar{a}^1, \bar{a}^2$ in \cite{CL}, and they correspond respectively to the  $l \rightarrow \infty$ limits of $\tilde{e}_{1,3}, \tilde{e}_{2,3}, \tilde{e}_{3,1}, \tilde{e}_{3,2}$. Also, by Lemma 7.12 of \cite{CL}, the generators of $(\mathcal{H}(4) \otimes \E(2))^{GL_2}$ are given by 
$$j^k =\  :b_1 \partial^k c_1: + :b_2 \partial^k c_2:,\qquad w^k = \ :a^1 \partial^k \bar{a}^1: + :a^2 \partial^k \bar{a}^2:,\qquad k = 0,1,$$
$$\nu^k = \ :b_1 \partial^k \bar{a}^1: +  :b_2 \partial^k \bar{a}^2:,\qquad  \mu ^k = \ :a^1 \partial^k c_1: +  :a^2 \partial^k c_2:,\qquad k = 0,1.$$

Next, replace the generators $e_{i,j}, h_i$ of $V^l(\slf_3)$ with $\sqrt{l} \ \tilde{e}_{i,j}, \sqrt{l} \ \tilde{h}_i$, respectively, and consider the following fields in $V^l(\slf_3) \otimes \E(2)$, expressed in terms of the variables $\tilde{e}_{i,j}, \tilde{h}_i, b_i, c_i$.

\begin{equation} \label{eq:limitfields} \begin{split}  J^0 & =  \widehat{H},\qquad J^1 = \frac{2}{9}\ \widehat{W}_2 - \frac{1}{9}\ \widehat{L} - \frac{1}{3} :\widehat{H}\widehat{H}: + \frac{1}{2}\ \partial \widehat{H},
\\  \Omega^0 & = \frac{8}{9}\  \widehat{L} + \frac{2}{9} \ \widehat{W}_2 - \frac{1}{3} :\widehat{H}\widehat{H}: ,
\\  \Omega^1 & = \frac{2 l}{9}\ \widehat{W}_3  - \frac{2}{9}  :\widehat{H}\widehat{L}:  + \frac{1}{9} \ \partial \widehat{L}  - \frac{1}{3} :(\partial \widehat{H}) \widehat{H}: + \frac{1}{18} :\widehat{H}\widehat{H}\widehat{H}:   \\ &- \frac{1}{9}  \ \partial^2 \widehat{H}  + \frac{1}{9} \ \partial \widehat{W}_2 + \frac{2 l}{3} :\widehat{G}_+ \widehat{G}_-: - \frac{2}{9} :\widehat{H}\widehat{W}_2:,
 \\  N^0 & = \sqrt{l} \ \widehat{G}_+,\qquad N^1 = -\frac{2 \sqrt{l}}{9}\  \widehat{Q}_+  -\frac{2 \sqrt{l}}{3} :\widehat{H} \widehat{G}_+ + \frac{8 \sqrt{l}}{9} \ \partial  \widehat{G}_+,
\\  M^0 & = \sqrt{l} \ \widehat{G}_- ,\qquad M^1 = \frac{2 \sqrt{l}}{9}\ \widehat{Q}_-  -\frac{2 \sqrt{l}}{3} :\widehat{H}\widehat{G}_- + \frac{\sqrt{l}}{9} \ \partial \widehat{G}_-.
\end{split} \end{equation}

It is straightforward to check that the $l \rightarrow \infty$ limits of these fields are well-defined, and that the limits of $J^k, \Omega^k, N^k, M^k$ coincide with $j^k, w^k, \nu^k, \mu^k$, for $k = 0,1$. By Corollary 6.12 of \cite{CL}, it follows that $\{J^k, \Omega^k, N^k, M^k |\ k = 0,1\}$ strongly generates $C^l(2)$ for generic values of $l$. Finally, it is clear from \eqref{eq:limitfields} that each of the fields $J^k, \Omega^k, N^k, M^k$ for $k = 0,1$, is a normally ordered polynomial in the fields $\widehat{H}, \widehat{L}, \widehat{W}_2, \widehat{W}_3,  \widehat{G}_{\pm}, \widehat{Q}_{\pm}$ and their derivatives, and the same statement holds if we switch $\{J^k, \Omega^k, N^k, M^k |\ k = 0,1\}$ and $\{\widehat{H}, \widehat{L}, \widehat{W}_2, \widehat{W}_3,  \widehat{G}_{\pm}, \widehat{Q}_{\pm}\}$. Therefore, we can replace $\{J^k, \Omega^k, N^k, M^k |\ k = 0,1\}$ with $\{\widehat{H}, \widehat{L}, \widehat{W}_2, \widehat{W}_3,   \widehat{G}_{\pm}, \widehat{Q}_{\pm}\}$ as a set of strong generators of $C^l(2)$ for generic values of $l$. \end{proof}

In the case $n=2$ and $\displaystyle r = -2+ \frac{l+2}{1+l+2}$,  we have $\W^r(\slf_2) \cong Vir^c$ where $\displaystyle c = \frac{l (5 + l)}{(2 + l) (3 + l)}$. Here $Vir^c$ denotes the universal Virasoro vertex algebra with central charge $c$. Similarly, we denote by $Vir_c$ the simple quotient of $Vir^c$. 

\begin{cor} For $l>1$ a positive integer and $(k+1)(l+3)=1$, $\W_k(\slf(3|2))$ is lisse and rational. In particular it is an extension of the rational vertex algebra
$$ V_{\sqrt{6(3+l)} \mathbb{Z}} \otimes Vir_c \otimes \W_s(\slf_l), \qquad c =  \frac{l (5 + l)}{(2 + l) (3 + l)},\qquad s = -l+ \frac{l+2}{l+3}.$$
\end{cor}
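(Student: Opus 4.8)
The plan is to deduce the Corollary from the main theorem, Theorem~\ref{thm:n=2main}, together with the structural results already established for $C_l(n)$ in the $n=2$ case. Theorem~\ref{thm:n=2main} gives the generic isomorphism $\W^k(\slf(3|2)) \cong C^l(2)$ at the level of universal vertex algebras, where $(k+1)(l+3)=1$. The first step is to pass to simple quotients: since an isomorphism of vertex superalgebras carries the maximal proper graded ideal to the maximal proper graded ideal, it descends to an isomorphism $\W_k(\slf(3|2)) \cong C_l(2)$ of the unique simple graded quotients. This reduces the entire corollary to establishing the stated properties for $C_l(2)$, and here I would invoke the results assembled earlier in the excerpt for $C_l(n)$ specialized to $n=2$.

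Next I would identify the three tensor factors in the decomposition. From the discussion preceding the Question, for $l>1$ a positive integer we have that $C_l(n)$ is an extension of $V_L \otimes \W_r(\slf_n) \otimes \mathcal{G}_l(n)$, where $L=\sqrt{n(n+1)(n+l+1)}\,\Z$ and $\mathcal{G}_l(n)\cong \W_s(\slf_l)$ with $r=-n+\frac{l+n}{1+l+n}$ and $s=-l+\frac{l+n}{1+l+n}$. Setting $n=2$ gives the lattice $L=\sqrt{6(3+l)}\,\Z$, matching the statement, and the paragraph immediately preceding this corollary records that $\W_r(\slf_2)\cong Vir_c$ with $c=\frac{l(5+l)}{(2+l)(3+l)}$. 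I would also simplify $s$: with $n=2$, $s=-l+\frac{l+2}{l+3}$, exactly as claimed. Thus the three factors are $V_{\sqrt{6(3+l)}\Z}$, $Vir_c$, and $\W_s(\slf_l)$, and the conformal embedding $\mathcal{H}_0 \otimes \W^r(\slf_2)\otimes \mathcal{G}^l(2)\hookrightarrow C^l(2)$ from \eqref{eq:confemb} (with $\mathcal{H}_0$ enhanced to the lattice algebra $V_L$ at the simple level) gives the desired extension structure.

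Having exhibited $C_l(2)$ as an extension of this triple tensor product, the lisseness and rationality follow by the same argument given in the excerpt for general $C_l(n)$. Each tensor factor is lisse and rational: $V_{\sqrt{6(3+l)}\Z}$ is a rational lattice vertex algebra; $Vir_c$ for these admissible values of $c$ is a minimal-model Virasoro algebra, hence lisse and rational; and $\W_s(\slf_l)$ is a nondegenerate admissible-level principal $\W$-algebra, lisse and rational by \cite{Ar2,Ar3}. A finite tensor product of lisse rational vertex algebras is again lisse and rational, so the base $V_L \otimes Vir_c \otimes \W_s(\slf_l)$ has these properties. Finally, since $C_l(2)$ is a simple vertex superalgebra extension of this rational base, it is itself lisse and rational; this is precisely the mechanism cited in the excerpt as Corollary~14.1 of \cite{ACL}.

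The main obstacle is not any single hard computation but rather verifying that the integrality and admissibility hypotheses needed for rationality genuinely hold for \emph{all} integers $l>1$ under the constraint $(k+1)(l+3)=1$. Concretely, I must confirm that $s=-l+\frac{l+2}{l+3}$ is a nondegenerate admissible level for $\widehat{\slf_l}$ and that $c=\frac{l(5+l)}{(2+l)(3+l)}$ lies in the Virasoro minimal series, for every such $l$; this is the arithmetic content that makes the rationality of the factors (as opposed to merely the universal algebras) legitimate. Once these admissibility conditions are checked, invoking the extension principle of \cite{ACL} to transfer rationality from the base to $C_l(2)$, and hence to $\W_k(\slf(3|2))$, is routine. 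The degenerate boundary case $l=1$ is explicitly excluded, consistent with the earlier remark that $\mathcal{G}_1(2)\cong\C$ there.
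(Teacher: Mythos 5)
There is a genuine gap, and it occurs at the very first step. You apply Theorem~\ref{thm:n=2main} at a positive integer $l>1$ and then pass to simple quotients, but Theorem~\ref{thm:n=2main} only asserts the isomorphism $\W^k(\slf(3|2)) \simeq C^l(2)$ for \emph{generic} $k$ (equivalently, generic $l$). Positive integers $l>1$ are precisely non-generic values: every ingredient of the theorem's proof (the screening-operator realization \eqref{scr eq}, the simplicity of $\W^k(\slf(3|2))$ used to get injectivity of $\gamma$, and the strong generation of $C^l(2)$ via Corollary 6.12 of \cite{CL}) is only valid generically. Indeed, if an integer level were generic, $C^l(2)$ would be simple and equal to $C_l(2)$, which is incompatible with the rationality phenomenon the corollary is about; rationality only ever occurs at special levels. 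So your reduction ``the theorem gives $\W^k \cong C^l(2)$, hence $\W_k \cong C_l(2)$'' is not available at the values of $l$ the corollary concerns.

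The paper closes exactly this gap with a deformation argument that your proposal is missing: replace $k$ and $l$ by formal variables ${\bf k}$ and ${\bf l}$ related by $({\bf k}+1)({\bf l}+3)=1$, so that Theorem~\ref{thm:n=2main} yields an isomorphism $\W^{{\bf k}}(\slf(3|2)) \rightarrow C^{{\bf l}}(2)$ of vertex algebras defined over suitably localized polynomial rings. One then specializes: the specialization of $\W^{{\bf k}}(\slf(3|2))$ at ${\bf k}=k$ is $\W^k(\slf(3|2))$ for all $k \neq -1$, and --- this is the nontrivial input, Corollary 6.7 of \cite{CL} --- the specialization of $C^{{\bf l}}(2)$ at ${\bf l}=l$ coincides with the actual coset $C^l(2)$ for all real $l>-2$ (this is not automatic, since forming a coset need not commute with specialization). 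Only after this does one have $\W^k(\slf(3|2)) \simeq C^l(2)$ at integer $l>1$, and then the rest of your argument goes through essentially as in the paper: Theorem 8.1 of \cite{CL} identifies $C_l(2)$ with $\Com(L_{l+1}(\slf_2)\otimes V_{\sqrt{6(3+l)}\Z},\, L_l(\slf_3)\otimes \E(2))$, and lisseness and rationality follow from Corollary 14.1 of \cite{ACL}. Your concluding worry about checking admissibility of $s$ and minimality of $c$ by hand is not where the difficulty lies; those verifications are subsumed in the cited results, whereas the specialization step is the genuinely essential point.
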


\begin{proof} If we replace the complex parameters $k$ and $l$ in $\W^k(\slf(3|2))$ and $C^l(2)$ with formal variables $\bf{k}$ and $\bf{l}$, respectively, Theorem \ref{thm:n=2main} implies that we have an isomorphism of vertex algebras
$$ \W^{\bf{k}}(\slf(3|2)) \rightarrow C^{\bf{l}}(2),$$ which are defined over the (suitably localized) rings $\mathbb{C}[{\bf k}]$ and $\mathbb{C}[\bf{l}]$, respectively. Here $\bf{k}$ and $\bf{l}$ are related by ${(\bf k} +1)({\bf l}+3)=1$.

Recall that for all $k\in \mathbb{C} \setminus \{-1\}$, the specialization of $\W^{\bf{k}}(\slf(3|2))$ at ${\bf k} = k$, coincides with $\W^{k}(\slf(3|2))$. Similarly, by Corollary 6.7 of \cite{CL}, the specialization of $C^{\bf{l}}(2)$ at ${\bf l} = l$ coincides with $C^l(2)$ for all real numbers $l >-2$. (In \cite{CL}, the formal variable $\bf{l}$ was chosen instead so that ${\bf l} = \sqrt{l}$ gave the specialization of $C^{\bf l}(2)$ to $C^l(2)$, but the same proof applies). It follows that for all positive integers $l > 1$, the isomorphism $\W^k(\slf(3|2))\simeq C^l(2)$ of Theorem \ref{thm:n=2main} holds.

Next, by Theorem 8.1 of \cite{CL}, for all positive integers $l > 1$, the natural map $$C^l(2) \rightarrow C_l(2) \cong  \Com(L_{l+1}(\slf_2) \otimes V_{\sqrt{6(3+l)} \mathbb{Z}} , L_l(\slf_{3}) \otimes \E(2)),$$ is surjective. Therefore $\W_k(\slf(3|2)) \cong \Com(L_{l+1}(\slf_2) \otimes V_{\sqrt{6(3+l)} \mathbb{Z}} , L_l(\slf_{3}) \otimes \E(2))$ for all positive integers $l > 1$, and the result follows from Corollary 14.1 of \cite{ACL} in the case $n=2$.  \end{proof}

\begin{rem}  Set $(k+1)(l+3)=1$ and $\displaystyle c = \frac{l (5 + l)}{(2 + l) (3 + l)}$. If the maps \eqref{eq:simplequotientmap} exist for $n=2$, we would have a conformal embedding of simple vertex algebras
$$ \mathcal{H}_0 \otimes Vir_{c} \otimes \W_{s}(\slf_m) \hookrightarrow \W_k(\slf(3|2)),$$
in the following cases.
\begin{enumerate}
\item $m\geq 3$, $\displaystyle l =  -3 + \frac{m - 3}{m-1}$,  and $\displaystyle s = -m + \frac{m-3}{m-1}$, 
\smallskip
\item $m\geq 3$, $\displaystyle l = -3 + \frac{3}{m+1}$, and  $\displaystyle s = -m + \frac{m-2}{m+1}$. 
\end{enumerate}
\end{rem}

\end{document}